\shorttitle{On the Mabinogion urn model}
\DeclareMathOperator{\ZZ}{\mathbb{Z}}
\DeclareMathOperator{\NN}{\mathbb{N}}
\DeclareMathOperator{\PP}{\mathbf{P}}
\DeclareMathOperator{\EE}{\mathbf{E}}
\newcommand{\nbd}{\nobreakdash-\hspace{0pt}}
\begin{document}

\title{On the Mabinogion urn model}

\author[\r{A}bo Akademi University]{David Stenlund}

\address{Mathematics and Statistics, Faculty of Science and Engineering, \r{A}bo Akademi University, FIN-20500 \r{A}bo, Finland}

\email{david.stenlund@abo.fi}

\begin{abstract}
In this paper we discuss the Mabinogion urn model introduced by D. Williams in \emph{Probability with Martingales} (1991). Therein he describes an optimal control problem where the objective is to maximize the expected final number of objects of one kind in the Mabinogion urn model. Our main contribution is formulas for the expected time to absorption and its asymptotic behavior in the optimally controlled process. We also present results for the non-controlled Mabinogion urn process and briefly analyze other strategies that become superior if a certain discount factor is included. 
\end{abstract}

\keywords{Mabinogion sheep problem; Markov chain; difference equation; extinction time; random walk; hypergeometric function; binomial identity; Doob's $h$-transform}

\ams{60J10}{49J21; 60G50} 

\section{Introduction}

The \emph{Mabinogion urn model} is defined in the following way:
\begin{itemize} \itemsep0em
\item There is an urn containing white and black balls. 
\item At each moment of time $1,2,3,\dotsc$ one ball is drawn randomly from the urn, independently of the previous selections. The color of the ball is observed and the ball is returned to the urn. 
\item If the drawn ball is white, one black ball (if there are any left) is replaced by a white one. Similarly, if the drawn ball is black, one of the remaining white balls (if any) is replaced by a black ball. 
\item If all remaining balls are of the same color the process stops. 
\end{itemize}

Let $\{(W_n,B_n):n=0,1,\dotsc\}$ be the stochastic process corresponding to the Mabinogion urn model, hereafter called the \emph{M\nbd process}, where $W_n$ and $B_n$ are the number of white and black balls, respectively, after $n$ transitions. This process is a Markov chain on a finite state space with two absorbing states (\emph{i.e.} when all balls are of the same color). Thus the hitting time of either of the absorbing states,
\begin{equation*}
H:=\min\{n:W_n=0 \textrm{ or } B_n=0\},
\end{equation*}
is almost surely finite. In Figure~\ref{fig_sample_paths} is a plot of $B_n$ for a few sample paths of the M\nbd process starting from $W_0=B_0=100$, and we can see that sooner or later the process is absorbed in either $B_H=0$ or $B_H=200$. A natural question arising is how long time this is likely to take. The main focus of this paper is indeed on the expected time to absorption, for which we will derive exact formulas as well as asymptotic results. This is done both for the M\nbd process and for a controlled version, which is described below. 

\begin{figure}
\begin{center}
\includegraphics[height=0.3\textwidth]{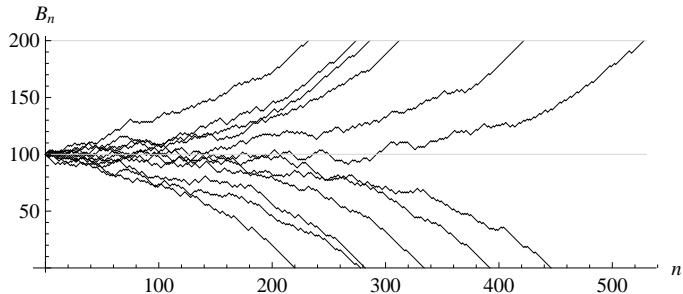}
\caption{Example of possible sample paths for the M-process.}
\label{fig_sample_paths}
\end{center}
\end{figure}

The Mabinogion urn model has for instance been suggested as a model for transmission of radiation damage \cite[pp.~227--230]{Reid} and as a model for a presidential election campaign between two candidates \cite{Flajolet}. The model was first described under the name ``Mabinogion'' by David Williams in his book \emph{Probability with Martingales} (1990) \cite[pp.~159--163]{Williams}. The name comes from a collection of Welsh medieval tales called \emph{The Mabinogion}, where in one of the stories \cite[pp.~144--145]{Mabinogion} there is a herd of sheep that change color like the balls in the urn model. Williams formulated an optimal control problem which will be referred to as the Mabinogion sheep problem, although we here consider colored balls rather than sheep. 

The Mabinogion sheep problem goes as follows. Suppose that the M\nbd process can be adjusted by permanently removing any number of balls from the urn just after time 0 and just after any time $1,2,\dotsc$ when a ball is drawn from the urn. What would be the optimal strategy if the objective is to \emph{maximize the expected final number of black balls}? 

It is obvious that no black balls should be removed during the process, so we can assume that only white balls are removed. Note that for an arbitrary strategy the process is not necessarily a Markov chain, but the process still ends up in an absorbing state almost surely. The optimal strategy is described in \cite{Williams} in the following way. 

At time 0 and after each time a ball has been drawn from the urn:
\begin{itemize} \itemsep0em
\item if the number of white balls is more than or equal to the number of black balls, immediately reduce the number of white balls to one less than the number of black balls; 
\item otherwise, do nothing. 
\end{itemize}
The optimality of this strategy was proved by Williams in \cite{Williams} using martingale theory. We use the same terminology as in \cite{Williams} and call the strategy Policy~A. In this paper we will examine how the expected time to absorption is affected when Policy~A is applied. For the sake of brevity we will call this the \emph{A\nbd controlled M\nbd process}. In particular we will see that when we start from a symmetric initial state we get almost twice as many black balls at the end if Policy~A is applied, while the time to absorption stays on the same level. 

A treatment of some continuous time diffusion models for the Mabinogion sheep problem is found in \cite{Chan}. For instance it is shown that the strategy corresponding to Policy~A is no longer optimal if one considers sufficiently small fractions of balls and also change the transition probabilities accordingly in order to get a suitable weak limit. As noted by Williams, there are similarities between the Mabinogion sheep problem and a portfolio selection problem in \cite{DavisNorman} with two choices of investment (one risky and one risk-free). There the optimal solution is to make a minimal number of transactions in order to keep either of the assets below a certain constant fraction of the total wealth, which can be compared to Policy~A where you keep the number of white balls below $\frac{1}{2}$ of the total number. We refer also to a recent article \cite{Lin}, where a generalization of the Mabinogion sheep problem is considered. 

There are, of course, many other stochastic urn models like the P\'olya urn \cite[pp.~67--68]{Flajolet2}, the OK Corral urn \cite{Kingman,Williams2} or the Ehrenfest urn \cite[pp.~69--71]{Flajolet2} that are somewhat similar to the Mabinogion urn model, but nevertheless have very different behavior. For instance the closely related Ehrenfest urn leads to a reflecting process, whereas the Mabinogion urn leads to an absorbing process. In \cite[Lemma 1]{Flajolet} the authors use a time-reversal transformation of the Ehrenfest urn to derive the probability distribution for the absorption time in the M\nbd process and the expected time to absorption when starting with an equal number of white and black balls, as well as a Gaussian limit for the case when the starting numbers are not equal. Thus there are already some results for the absorption time in the M\nbd process. Our results are instead derived using a similar method as in \cite{Williams}, and for the M\nbd process we thereby get an alternative expression for the expected time to absorption, which is a finite double sum rather than an infinite sum involving either hyperbolic functions or Kac coefficients as in \cite{Flajolet}. Furthermore, the Mabinogion sheep problem is not discussed in \cite{Flajolet} and we especially want to consider Policy~A here. As far as we know these are the first results that compare the time to absorption in both the M\nbd process and the A\nbd controlled M\nbd process. 

In the next section we present the main results, first for the M\nbd process and then for the A\nbd controlled M\nbd process. The results include precise formulas for the expected time to absorption as well as asymptotic expressions when the number of balls in the urn gets very large. We compare the results both in the case when the processes start with an equal number of white and black balls and when these numbers are not equal. We also give a comparison between the M\nbd process and a conditional M\nbd process given the event that there are only black balls left at absorption. In Section~\ref{section_strategies} we give examples of some other strategies for controlling the M\nbd process, as a first step towards general strategy behavior in the Mabinogion sheep problem. A reason for using other strategies might be to have a balance between a large final number of black balls and a short time to absorption. For instance we discuss the optimality when a temporal discount factor is included and conclude that in this case there are better strategies than Policy~A. In Section~\ref{section_proofs} we provide proofs for the main results. One of the lemmas used is a pair of neat binomial identities which are interesting in their own right.

\section{Main results}\label{section_results}

\subsection{The M\nbd process}\label{sub_uncontrolled}

First we consider the M\nbd process as described at the beginning of the introduction. While the main focus of this paper is on the expected time to absorption, we will also include, for the sake of completeness, formulas for the expected final number of black balls, in order to make comparison with earlier results easier. 

Recall that $H:=\min\{n:W_n=0 \textrm{ or } B_n=0\}$ denotes the hitting time of either of the absorbing states. For any $w,b\in\NN$, let 
\begin{equation*}
V(w,b) := \EE(B_H | W_0=w,B_0=b)
\end{equation*}
be the expected final number of black balls and let 
\begin{equation*}
T(w,b) := \EE(H | W_0=w,B_0=b)
\end{equation*}
be the expected time to absorption in the M\nbd process starting with $w$ white and $b$ black balls. First we give an explicit formula for the value of $V(w,b)$. Proofs of the results are postponed to Section \ref{proofs_M}.

\begin{proposition}\label{formula_V0}
For any $w,b\in\NN$, 
\begin{equation} 
V(w,b) = (w+b) 2^{-(w+b-1)} \sum_{i=0}^{b-1}\binom{w+b-1}{i}. \label{eq_V0}
\end{equation}
In particular, for any $k\in\NN$,
\begin{equation} 
V(k,k) = k. \label{eq_V0(k,k)}
\end{equation}
\end{proposition}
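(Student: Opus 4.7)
The plan is to identify a martingale of the form $g(W_n)$ whose terminal value matches $B_H$ on the two absorbing states, and then to apply the optional stopping theorem. Since the total number of balls $N:=w+b$ is preserved by every transition, the state of the chain is carried by $W_n$ alone, and the only values $B_H=N-W_H$ can take are $0$ (when $W_H=N$) and $N$ (when $W_H=0$).

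First I would determine the harmonic functions on the line $\{w+b=N\}$. Conditioning on the colour of the drawn ball at $W_k=w$ gives the one-step balance
\begin{equation*}
\frac{w}{N}\,g(w+1)+\frac{N-w}{N}\,g(w-1)=g(w), \qquad 1\le w\le N-1,
\end{equation*}
which rearranges into $w\,\Delta g(w)=(N-w)\,\Delta g(w-1)$ for the forward differences $\Delta g(w):=g(w+1)-g(w)$. Telescoping immediately produces $\Delta g(w)=\binom{N-1}{w}\,\Delta g(0)$, so the harmonic functions form a two-parameter family indexed by the values $g(0)$ and $\Delta g(0)$. Using $\sum_{j=0}^{N-1}\binom{N-1}{j}=2^{N-1}$, the boundary conditions $g(0)=N$ and $g(N)=0$ can be solved uniquely, yielding the unique martingale whose terminal value coincides with $B_H$ on both absorbing states.

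Because $H$ is almost surely finite and $g$ is bounded on a finite state space, optional stopping applied to $g(W_{n\wedge H})$ delivers $V(w,b)=\EE[g(W_H)]=g(w)$. A direct index substitution $i=N-1-j$ then recasts the resulting tail sum into the form stated in \eqref{eq_V0}. The identity \eqref{eq_V0(k,k)} drops out either by plugging $w=b=k$ into \eqref{eq_V0} and invoking the binomial symmetry $\sum_{i=0}^{k-1}\binom{2k-1}{i}=2^{2k-2}$, or more transparently from the $W\leftrightarrow B$ symmetry of the dynamics at a symmetric initial condition, which forces $\PP(B_H=0)=\PP(B_H=2k)=\tfrac12$ and hence $V(k,k)=k$ directly.

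The only real obstacle I anticipate is pinning down the correct pair of boundary conditions on $g$ so that $g(W_H)=B_H$ holds almost surely on both absorbing states; once that choice is fixed, the remainder is routine manipulation of binomial sums, and the same two-parameter family of martingales is likely to be the workhorse for the later absorption-time computation as well.
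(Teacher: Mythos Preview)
Your argument is correct and, at the computational level, coincides with the paper's: both reduce to solving the same second-order linear difference equation on $\{0,1,\dots,N\}$ with boundary values $N$ and $0$, and both do so by the standard telescoping of the forward differences that yields $\Delta g(w)=\binom{N-1}{w}\Delta g(0)$. The only difference is in the wrapper. The paper first observes via one-step conditioning that $V$ itself satisfies the recursion \eqref{eq_a3} with boundary data \eqref{eq_a1}, and then invokes its general Lemma~\ref{lemma_method} (which packages exactly this telescoping) to write down the solution. You instead construct the harmonic function $g$ first and then identify $V(w,b)=g(w)$ through optional stopping on the bounded martingale $g(W_{n\wedge H})$. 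This is the martingale repackaging of the same boundary-value problem, very much in the spirit of Williams' book; it is slightly more self-contained for this single result, whereas the paper's formulation via Lemma~\ref{lemma_method} has the advantage of being reused verbatim for $T$, $V^A$ and $T^A$, including the inhomogeneous case $r\not\equiv 0$ needed for the absorption times.
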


\begin{remark}
In Equation \eqref{eq_V0} one can also identify the probability of reaching the absorbing state where only black balls are left, namely
\begin{align}\label{eq_P}
\PP(W_H=0 | W_0=w,B_0=b) &= \PP(B_H=w+b | W_0=w,B_0=b) \nonumber\\
&= 2^{-(w+b-1)} \sum_{i=0}^{b-1}\binom{w+b-1}{i}.
\end{align}
This result is also given in \cite[eq. (4.198)]{Reid}. In the symmetric case when $w=b=k$ it is easy to verify that this probability is 1/2, just as expected. 
\end{remark}

Since the M\nbd process is an absorbing Markov chain on a finite state space we know that the time until absorption has finite expectation $T(w,b)$. The precise value is given in the next formula. 

\begin{proposition}\label{formula_T0}
For any $w,b\in\NN$, 
\begin{equation} 
T(w,b) = \frac{w+b}{2(w+b-1)} \sum_{i=0}^{\min\{w,b\}-1}\sum_{j=i}^{w+b-2-i}\frac{\binom{w+b-1}{i}}{\binom{w+b-2}{j}}. \label{eq_T0}
\end{equation}
In particular, for any $k\in\NN$,
\begin{equation}\label{eq_t0_growth}
T(k,k) = k \sum_{i=0}^{k-1} \frac{1}{2i+1}.
\end{equation}
\end{proposition}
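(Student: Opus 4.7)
The plan is to fix the total $N:=w+b$, view $t(w):=T(w,N-w)$ as a function on $\{0,1,\dotsc,N\}$, and solve the resulting boundary-value problem. Conditioning on the first transition of the M\nbd process gives the recurrence
\begin{equation*}
t(w) = 1 + \frac{w}{N}\,t(w+1) + \frac{N-w}{N}\,t(w-1), \qquad 1\le w\le N-1,
\end{equation*}
with $t(0)=t(N)=0$. Passing to first differences $D(w):=t(w)-t(w-1)$ reduces this to the first-order linear equation
\begin{equation*}
w\,D(w+1) - (N-w)\,D(w) = -N.
\end{equation*}

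The homogeneous part of this recursion is solved by $D(w)\propto\binom{N-1}{w-1}$, which is exactly the shape of the absorption probability in Equation \eqref{eq_P}. Motivated by this, I would substitute $\tilde D(w):=D(w)/\binom{N-1}{w-1}$; combining with the identity $w\binom{N-1}{w}=(N-w)\binom{N-1}{w-1}=(N-1)\binom{N-2}{w-1}$, the recursion collapses to the telescoping relation
\begin{equation*}
\tilde D(w+1) - \tilde D(w) = -\frac{N}{(N-1)\binom{N-2}{w-1}}.
\end{equation*}
Summing yields $\tilde D(w)$ explicitly up to the single constant $\tilde D(1)$, which is pinned down by the boundary condition $\sum_{v=1}^{N} D(v) = t(N) = 0$. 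Summing once more, $t(w)=\sum_{v=1}^{w}D(v)$, and swapping the order of the resulting double sum, should produce an expression that manifestly depends on $(w,b)$ only through $N$ and $\min\{w,b\}$, as demanded by the white/black symmetry of the M\nbd process. A careful bookkeeping of summation ranges should collapse it to exactly Equation \eqref{eq_T0}, with the symmetric cut-offs $i\le j\le N-2-i$ arising from pairing terms $j$ and $N-2-j$.

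For the symmetric case $w=b=k$, plugging $N=2k$ into \eqref{eq_T0} gives
\begin{equation*}
T(k,k) = \frac{k}{2k-1}\sum_{i=0}^{k-1}\binom{2k-1}{i}\sum_{j=i}^{2k-2-i}\frac{1}{\binom{2k-2}{j}},
\end{equation*}
and the clean target $k\sum_{i=0}^{k-1}\frac{1}{2i+1}$ amounts to a purely combinatorial identity. I expect to prove it via the pair of binomial identities advertised in the introduction, stated as a separate lemma.

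The main obstacle is the combinatorial bookkeeping at the end. Solving the first-order difference equation via the integrating factor $\binom{N-1}{w-1}$ is routine once the ansatz is spotted, but extracting the nicely indexed double sum of \eqref{eq_T0} from the raw telescoping solution, and in particular establishing the nontrivial binomial identity that collapses the reciprocal-binomial double sum to the harmonic-type sum over odd reciprocals in the symmetric case, is where the real work lies.
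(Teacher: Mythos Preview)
Your approach is correct and essentially the same as the paper's. The paper packages the first-difference solution method into a general lemma (Lemma~\ref{lemma_method}), whose proof outline is precisely your integrating-factor step $\tilde D(w)=D(w)/\binom{N-1}{w-1}$; it then applies that lemma, simplifies the resulting $Q(N)$ by the symmetry $j\leftrightarrow N-2-j$ to obtain the range $i\le j\le N-2-i$, and for the symmetric case invokes exactly the binomial identity you anticipate (Equation~\eqref{eq_doublesum2} of Lemma~\ref{lemma_doublesum}).
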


Equation \eqref{eq_t0_growth} leads to the following formula for large $k$. 

\begin{corollary}\label{cor_T(k,k)}
As $k\to\infty$,
\begin{equation}\label{eq_Tkk_growth}
T(k,k)-\frac{k}{2}\left(\ln k + \ln 4 + \gamma \right) \to 0,
\end{equation}
where $\gamma = 0.5772\dots$ is the Euler--Mascheroni constant. 
\end{corollary}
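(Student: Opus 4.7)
The plan is to start from the closed form in Proposition~\ref{formula_T0}, namely $T(k,k) = k S_k$ with $S_k := \sum_{i=0}^{k-1} \frac{1}{2i+1}$, and compare this odd-harmonic sum against the usual harmonic numbers using the classical Euler--Mascheroni expansion. The idea is to show $S_k = \tfrac{1}{2}\ln k + \ln 2 + \tfrac{\gamma}{2} + O(1/k^2)$, so that multiplication by $k$ gives $T(k,k) = \tfrac{k}{2}(\ln k + \ln 4 + \gamma) + O(1/k)$, which is exactly the claimed limit.

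The key identity is the splitting of $H_{2k} := \sum_{j=1}^{2k} 1/j$ into its odd- and even-indexed parts. The even-indexed terms sum to $\tfrac{1}{2}H_k$, so $S_k = H_{2k} - \tfrac{1}{2}H_k$. Once this is in hand, I would plug in the standard asymptotic
\begin{equation*}
H_n = \ln n + \gamma + \frac{1}{2n} + O\!\left(\frac{1}{n^2}\right)
\end{equation*}
for both $n=2k$ and $n=k$. The logarithmic and Euler--Mascheroni terms combine as $\ln(2k) + \gamma - \tfrac{1}{2}(\ln k + \gamma) = \tfrac{1}{2}\ln k + \ln 2 + \tfrac{\gamma}{2}$; the $1/(2n)$ corrections give $\tfrac{1}{4k} - \tfrac{1}{4k} = 0$; and the remaining error is $O(1/k^2)$.

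Multiplying through by $k$ then yields $T(k,k) = \tfrac{k}{2}\ln k + k\ln 2 + \tfrac{k\gamma}{2} + O(1/k)$, and since $k\ln 2 + \tfrac{k\gamma}{2} = \tfrac{k}{2}(\ln 4 + \gamma)$, the difference $T(k,k) - \tfrac{k}{2}(\ln k + \ln 4 + \gamma)$ is $O(1/k)$ and therefore vanishes as $k \to \infty$.

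There is essentially no hard step here; the only thing to be careful about is that the $1/(2n)$ corrections in the Euler expansion of $H_{2k}$ and $\tfrac{1}{2}H_k$ cancel exactly, so that the next-order error is indeed $O(1/k^2)$ rather than $O(1/k)$. (If one only invoked the weaker $H_n = \ln n + \gamma + o(1)$, the proof would still give $T(k,k) - \tfrac{k}{2}(\ln k + \ln 4 + \gamma) = o(k)$, which is not enough; but the stronger expansion is classical and suffices.) Since the statement only claims convergence to zero, the $O(1/k)$ bound is more than we need.
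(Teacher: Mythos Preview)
Your proof is correct and follows essentially the same approach as the paper: both start from $T(k,k)=k\bigl(H_{2k}-\tfrac{1}{2}H_k\bigr)$ and apply the classical expansion $H_n=\ln n+\gamma+\tfrac{1}{2n}+O(1/n^2)$ to obtain an $O(1/k)$ error. Your explicit remark about the cancellation of the $\tfrac{1}{2n}$ corrections makes the $O(1/k)$ bound transparent, but otherwise the arguments are the same.
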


This corollary gives the asymptotic behavior of the expected time to absorption in the M\nbd process when the initial numbers of white and black balls are equal, and was also obtained in \cite[Proposition 2]{Flajolet} via a different approach. Now we turn to the controlled M\nbd process to see how the results above differ when Policy~A is applied.

\subsection{The A\nbd controlled M\nbd process}\label{sub_controlled}

Assume now that the M\nbd process is controlled, and more specifically that we apply Policy~A throughout the process, as described in the introduction. In this case we let $V^A(w,b)$ and $T^A(w,b)$ denote the expected final number of black balls and the expected time to absorption, respectively. In the special case when $w$ and $b$ are equal we use the shorter notations
\begin{equation*}
v_k:=V^A(k,k), \quad t_k:=T^A(k,k). 
\end{equation*}
Furthermore, we introduce
\begin{equation}\label{eq_pk}
p_k := 2^{-2k}\binom{2k}{k}
\end{equation}
since this expression occurs in many of the equations. Note that this value corresponds to the probability of getting $k$ heads out of $2k$ coin tosses. 

For $V^A(w,b)$ we have the following formulas, which are given in \cite[Sections 15.4 and 15.5]{Williams}.

\begin{proposition}\label{prop_Williams}
For any $k\in\ZZ^+$ and $c\in\{1,2,\dotsc,k\}$,
\begin{align} 
&V^A(k-c,k+c) = v_k + (2k-v_k) 2^{-(2k-2)} \sum_{i=k}^{k+c-1}\binom{2k-1}{i}, \label{eq_b1} \\
&V^A(k+1-c,k+c) = v_k + ((2k+1)-v_k) \frac{2^{-(2k-1)}}{1+p_k} \; \sum_{i=k}^{k+c-1}\binom{2k}{i}. \label{eq_b2}
\end{align}
The sequence $(v_k)_{k\geq1}$ satisfies the recursion
\begin{equation}\label{eq_vk_recursion}
v_{k+1} = \frac{1-p_k}{1+p_k}v_k + (2k+1) \frac{2p_k}{1+p_k}, \quad v_1=1.
\end{equation}
When $k\to\infty$,
\begin{equation}\label{eq_vk_approx}
V^A(k,k) - \left(2k+\frac{\pi}{4} -\sqrt{\pi k}\right) \to 0.
\end{equation}
\end{proposition}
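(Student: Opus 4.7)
The plan is to prove \eqref{eq_b1} and \eqref{eq_b2} by optional stopping applied to a natural martingale of the uncontrolled M\nbd process at a fixed total $N$. Seeking $f_N$ satisfying $w f_N(w+1) + (N-w) f_N(w-1) = N f_N(w)$, the first differences $g(w) := f_N(w) - f_N(w-1)$ must obey the telescoping relation $w g(w+1) = (N-w) g(w)$, forcing $g(w) = \binom{N-1}{w-1}$ and hence the martingale $f_N(w) = \sum_{j=0}^{w-1} \binom{N-1}{j}$; by the symmetry of the binomial row, $f_{2k}(k) = 2^{2k-2}$ and $f_{2k+1}(k+1) = 2^{2k-1}(1+p_k)$.

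For \eqref{eq_b1} I start the A\nbd controlled process at $(k-c, k+c)$ with $c \geq 1$. Since $W_0 < B_0$, Policy~A is inactive and the chain evolves on the total-$2k$ state space until the stopping time $\tau := \inf\{n : W_n \in \{0, k\}\}$. On $\{W_\tau = 0\}$ the process is absorbed with $B_H = 2k$, while on $\{W_\tau = k\}$ Policy~A relocates it to $(k-1, k)$, whose expected final black-ball count equals $V^A(k-1, k) = v_k$ (the immediate intervention at $(k, k)$ delivers exactly that state). Optional stopping applied to $f_{2k}$, combined with a binomial-symmetry rewriting, yields
\begin{equation*}
\PP(W_\tau = 0 \mid W_0 = k-c) = 2^{-(2k-2)} \sum_{i=k}^{k+c-1} \binom{2k-1}{i},
\end{equation*}
and conditioning on $\tau$ gives \eqref{eq_b1}. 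Equation \eqref{eq_b2} follows from the same argument with $N = 2k+1$, $\tau := \inf\{n : W_n \in \{0, k+1\}\}$, the key quantity $f_{2k+1}(k+1) = 2^{2k-1}(1+p_k)$, and Policy~A again relocating the process to $(k-1, k)$ on $\{W_\tau = k+1\}$. The recursion \eqref{eq_vk_recursion} then drops out by specializing \eqref{eq_b2} to $c = 1$, using $v_{k+1} = V^A(k, k+1)$ and $2^{-(2k-1)}\binom{2k}{k} = 2 p_k$.

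For the asymptotic \eqref{eq_vk_approx} I pass to $u_k := 2k - v_k$, which converts the recursion to $(1+p_k) u_{k+1} = 2 + (1-p_k) u_k$ with instantaneous fixed point $1/p_k$. Substituting $u_k = 1/p_k + r_k$ and using $p_{k+1}/p_k = (2k+1)/(2k+2)$, the recursion simplifies to
\begin{equation*}
r_{k+1} = \frac{1-p_k}{1+p_k} r_k - \frac{1}{(2k+1) p_k}.
\end{equation*}
Stirling's expansion $p_k = (\pi k)^{-1/2}(1 - 1/(8k) + O(k^{-2}))$ gives $1/p_k - \sqrt{\pi k} \to 0$, so \eqref{eq_vk_approx} is equivalent to $r_k \to -\pi/4$; this is the main obstacle. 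I would prove it by solving the linear recursion explicitly as
\begin{equation*}
r_k = -B_k^{-1}\Bigl(1 + \sum_{j=1}^{k-1} \frac{B_{j+1}}{(2j+1) p_j}\Bigr), \qquad B_k := \prod_{j=1}^{k-1} \frac{1+p_j}{1-p_j},
\end{equation*}
and applying a Laplace-type analysis to the sum: the summand $B_{j+1}/(B_k (2j+1) p_j)$ concentrates in a $\sqrt{k}$\nbd wide window just below $j = k$, where $\log(B_{j+1}/B_k) \approx -2(k-j)/\sqrt{\pi k}$ and $1/((2j+1) p_j) \approx \sqrt{\pi}/(2\sqrt{k})$, producing an asymptotic Riemann-sum evaluation of $\pi/4$ and hence $r_k \to -\pi/4$, which is equivalent to \eqref{eq_vk_approx}.
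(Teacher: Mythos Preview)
Your derivation of \eqref{eq_b1}, \eqref{eq_b2} and \eqref{eq_vk_recursion} via the martingale $f_N(w)=\sum_{j=0}^{w-1}\binom{N-1}{j}$ and optional stopping is correct. The paper does not give a proof of this proposition (it cites Williams), but says it can be done ``in the same way as for Proposition~\ref{time_equations}'', i.e.\ by Lemma~\ref{lemma_method}. That lemma is precisely the solution of the two-boundary linear recursion, so your optional-stopping argument is the probabilistic rephrasing of the same computation; the harmonic function $f_N$ is what makes the gambler's-ruin quotient in \eqref{eq_Xc} come out, and the binomial symmetry you invoke is the same one that collapses the product in \eqref{eq_Xc}. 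Nothing is lost or gained here --- the two routes are equivalent.

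For the asymptotic \eqref{eq_vk_approx} your route genuinely diverges from the paper's. You reduce correctly to $r_k\to-\pi/4$, but then solve the recursion explicitly and propose a Laplace-type evaluation of $B_k^{-1}\sum_{j}B_{j+1}/((2j+1)p_j)$. This is plausible, but as written it is only a heuristic: you would still need uniform error control on $\log(B_{j+1}/B_k)$ over the whole range, a tail bound for $j$ far from $k$, and a rigorous Riemann-sum/Laplace step. The paper's implied method (the one actually carried out for $t_k$ in the proof of Theorem~\ref{thm_tk_growth}) is both shorter and fully rigorous: set $\zeta_k:=r_k+\pi/4$, so that your recursion becomes
\[
\zeta_{k+1}=\theta_k\,\zeta_k+(1-\theta_k)\,Z_k,\qquad \theta_k=\frac{1-p_k}{1+p_k},\quad Z_k=\frac{\pi}{4}-\frac{1+p_k}{2(2k+1)p_k^2}.
\]
Since $(2k+1)p_k^2\to 2/\pi$ one has $Z_k\to 0$, and since $1-\theta_k\sim 2p_k\sim 2/\sqrt{\pi k}$ the product $\prod_k\theta_k$ diverges to $0$; together these force $\zeta_k\to 0$ exactly as in the paper's argument around \eqref{ineq_zeta}. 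This replaces your Laplace analysis by a two-line stability argument and closes the only soft spot in your proposal.
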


Now we turn to the expected absorption time $T^A(w,b)$, for which there are corresponding formulas. We have the following results, which are proved in Section~\ref{proofs_AM}.

\begin{proposition} \label{time_equations}
For any $k\in\ZZ^+$ and $c\in\{1,2,\dotsc,k\}$,
\begin{align} 
&T^A(k-c,k+c) = t_k + (2k\alpha_k(k)-t_k) 2^{-(2k-2)} \sum_{i=k}^{k+c-1}\binom{2k-1}{i}-2k\alpha_k(c), \label{eq_ time1} \\
&T^A(k+1-c,k+c) \nonumber\\
&\qquad = t_k + ((2k+1)\beta_k(k+1)-t_k) \frac{2^{-(2k-1)}}{1+p_k} \; \sum_{i=k}^{k+c-1}\binom{2k}{i} - (2k+1)\beta_k(c), \label{eq_ time2}
\end{align}
where
\begin{equation}\label{eq_alphabeta}
\alpha_k(n) := \frac{1}{2k-1} \sum_{i=k+1}^{k+n-1} \sum_{j=k}^{i-1} \frac{\binom{2k-1}{i}}{\binom{2k-2}{j}}, \quad \beta_k(n) := \frac{1}{2k} \sum_{i=k+1}^{k+n-1} \sum_{j=k}^{i-1} \frac{\binom{2k}{i}}{\binom{2k-1}{j}}.
\end{equation}
The sequence $(t_k)_{k\geq1}$ satisfies the recursion 
\begin{equation}\label{eq_time_recursion}
t_{k+1} = \frac{1-p_k}{1+p_k}t_k + (2k+1)\frac{p_k}{1+p_k} \sum_{i=0}^{k-1}\frac{1}{2i+1},\quad t_1=0. 
\end{equation}
\end{proposition}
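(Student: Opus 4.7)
The plan is to treat the A\nbd controlled process layer by layer. As long as the current state $(W,B)$ satisfies $W<B$, Policy~A makes no adjustment and the chain moves on the layer of total size $N=W+B$ as an ordinary birth-death walk; the only intervention on layer $2k$ occurs when the walk attempts to cross $W=B$ at $(k,k)$, whereupon Policy~A prunes white balls to yield $(k-1,k)$, the top of layer $2k-1$. Consequently the chain started on layer $2k$ either absorbs at $(0,2k)$ or descends to the top of layer $2k-1$, from which the remaining expected time equals $t_k$ exactly; the analogous statement holds on layer $2k+1$.

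To derive \eqref{eq_ time1}, I would set $u_c := T^A(k-c,k+c)$ with the convention $u_0:=t_k$ and the true boundary $u_k=0$. A one-step decomposition gives
\begin{equation*}
u_c = 1 + \frac{k-c}{2k}\,u_{c-1} + \frac{k+c}{2k}\,u_{c+1}, \qquad 1\le c\le k-1.
\end{equation*}
Introducing $\Delta_c:=u_c-u_{c+1}$ reduces this to the first-order linear recurrence $\Delta_c=\frac{k-c}{k+c}\Delta_{c-1}+\frac{2k}{k+c}$, which I solve with the integrating factor $f_c:=1/\binom{2k-1}{k+c}$, chosen so that $f_c/f_{c-1}=(k+c)/(k-c)$. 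Using the binomial identity $(k+c)\binom{2k-1}{k+c}=(2k-1)\binom{2k-2}{k+c-1}$, multiplication by $f_c$ yields the telescoping identity
\begin{equation*}
f_c\Delta_c - f_{c-1}\Delta_{c-1} = \frac{2k}{(2k-1)\binom{2k-2}{k+c-1}}.
\end{equation*}
Summation gives $\Delta_m$ in terms of $\Delta_0$; the boundary condition $\sum_{c=0}^{k-1}\Delta_c=t_k$ combined with $\sum_{i=k}^{2k-1}\binom{2k-1}{i}=2^{2k-2}$ pins down $\Delta_0$. Unwinding $u_c=t_k-\sum_{m<c}\Delta_m$ then produces \eqref{eq_ time1}, with $\alpha_k(c)$ emerging naturally from the resulting nested binomial sum. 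Formula \eqref{eq_ time2} is obtained in exactly the same way on layer $2k+1$, now with the integrating factor $\tilde f_c:=1/\binom{2k}{k+c}$, the key identity $(k+c)\binom{2k}{k+c}=2k\binom{2k-1}{k+c-1}$, and the normalising sum $\sum_{i=k}^{2k}\binom{2k}{i}=2^{2k-1}(1+p_k)$, which accounts for the factor $1+p_k$ in the denominator.

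For the recursion \eqref{eq_time_recursion}, I would note that Policy~A sends $(k+1,k+1)$ to $(k,k+1)$, so $t_{k+1}=T^A(k,k+1)$, i.e.\ the $c=1$ instance of \eqref{eq_ time2}. After using $2^{-(2k-1)}\binom{2k}{k}=2p_k$ and $\beta_k(1)=0$ this collapses to
\begin{equation*}
t_{k+1} = \frac{1-p_k}{1+p_k}\,t_k + (2k+1)\,\frac{2p_k}{1+p_k}\,\beta_k(k+1).
\end{equation*}
Matching with \eqref{eq_time_recursion} then requires the identity $2\beta_k(k+1)=\sum_{i=0}^{k-1}(2i+1)^{-1}$. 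I expect this last step to be the main obstacle: it expresses a double sum of binomial ratios as a single harmonic sum, and it is precisely the sort of thing flagged as the ``pair of neat binomial identities'' in the introduction, so I anticipate that it will be extracted as a separate lemma, most plausibly by induction on $k$ or via a partial-fraction decomposition of the kernel $1/\binom{2k-1}{j}$.
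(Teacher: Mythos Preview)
Your proposal is correct and takes essentially the same approach as the paper: the paper packages the birth--death recursion solution as a general Lemma (solving $X(k)=p(k)X(k-1)+(1-p(k))X(k+1)+r(k)$ by first-differencing and iterating, exactly your integrating-factor argument) and then applies it twice with boundary data $T^A(k,k)=t_k$, $T^A(0,2k)=0$ (resp.\ $T^A(k+1,k)=t_k$, $T^A(0,2k+1)=0$), obtaining the same $Q$-terms you call $2k\alpha_k(c)$ and $(2k+1)\beta_k(c)$. For \eqref{eq_time_recursion} the paper does precisely what you anticipate---sets $c=1$ in \eqref{eq_ time2} and invokes the separately stated binomial identity $\beta_k(k+1)=\tfrac12\sum_{i=0}^{k-1}(2i+1)^{-1}$, proved via hypergeometric manipulations rather than the partial-fraction route you suggest.
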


Since the recursion formulas \eqref{eq_vk_recursion} and \eqref{eq_time_recursion} look similar, one can also expect a somewhat similar looking expression for the asymptotic growth of the expected time to absorption as in \eqref{eq_vk_approx} above. Indeed, we have: 

\begin{theorem}\label{thm_tk_growth}
As $k\to\infty$,
\begin{equation} \label{eq_tk_growth}
T^A(k,k)-\left(\left(\frac{k}{2}+\frac{\pi}{16}-\frac{\sqrt{\pi k}}{4}\right)\left(\ln k + \ln 4 + \gamma\right)+ \frac{3\pi}{16}-\frac{\sqrt{\pi k}}{4}-\frac{1}{4}\right) \to 0,
\end{equation}
where $\gamma = 0.5772\dots$ is the Euler--Mascheroni constant. 
\end{theorem}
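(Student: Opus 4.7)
The plan is to reduce the recursion \eqref{eq_time_recursion} for $t_k$ to an already-analyzed one by exploiting its structural similarity to \eqref{eq_vk_recursion}. Writing $H_k := \sum_{i=0}^{k-1}\frac{1}{2i+1}$, the forcing term in \eqref{eq_time_recursion} is exactly $\frac{1}{2}H_k$ times the forcing in \eqref{eq_vk_recursion}, which motivates the ansatz
\begin{equation*}
t_k = \tfrac{1}{2}\, v_k H_k + r_k.
\end{equation*}
Substituting into \eqref{eq_time_recursion} and using $H_{k+1}-H_k = \tfrac{1}{2k+1}$ together with \eqref{eq_vk_recursion}, the $H_k$-dependent contributions cancel identically, leaving the cleaner scalar recursion
\begin{equation*}
r_{k+1} = \frac{1-p_k}{1+p_k}\,r_k \;-\; \frac{v_{k+1}}{2(2k+1)}, \qquad r_1 = -\tfrac{1}{2}.
\end{equation*}

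Combining \eqref{eq_vk_approx} with the standard digamma expansion $H_k = \tfrac{1}{2}(\ln k + \ln 4 + \gamma) + O(1/k^2)$ yields $\tfrac{1}{2} v_k H_k = \bigl(\tfrac{k}{2}+\tfrac{\pi}{16}-\tfrac{\sqrt{\pi k}}{4}\bigr)(\ln k+\ln 4+\gamma) + o(1)$, once \eqref{eq_vk_approx} is first sharpened to $v_k = 2k + \tfrac{\pi}{4} - \sqrt{\pi k} + O(k^{-1/2})$ so that its error, multiplied by $H_k = O(\ln k)$, is still $o(1)$. I would establish this sharpening as a separate lemma by applying the same ansatz-and-analyze strategy to $v_k - (2k + \pi/4 - \sqrt{\pi k})$. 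The theorem then reduces to proving that $r_k + \tfrac{\sqrt{\pi k}}{4} \to \tfrac{3\pi}{16} - \tfrac{1}{4}$.

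To handle this, I would posit the refined ansatz $r_k = -\tfrac{\sqrt{\pi k}}{4} + c + \varepsilon_k$ and insert it into the $r_k$ recursion. Using Stirling's expansion $p_k = \tfrac{1}{\sqrt{\pi k}}\bigl(1 - \tfrac{1}{8k} + O(1/k^2)\bigr)$ together with the sharpened $v_k$ asymptotic, the $\sqrt{k}$ and constant pieces cancel automatically, while the $k^{-1/2}$ coefficients cancel precisely when $1 + 4c = \tfrac{3\pi}{4}$, fixing $c = \tfrac{3\pi - 4}{16} = \tfrac{3\pi}{16} - \tfrac{1}{4}$ exactly as required. An equivalent route is to solve the linear recursion in closed form, $r_k = P_k\bigl(r_1 - \sum_{j=1}^{k-1}\tfrac{v_{j+1}}{2(2j+1) P_{j+1}}\bigr)$ with $P_k := \prod_{j=1}^{k-1}\tfrac{1-p_j}{1+p_j}$, observe that $\ln P_k \sim -4\sqrt{k/\pi}$, and extract the constants via a Laplace/boundary-layer estimate on the sum, which concentrates in an $O(\sqrt{k})$ window near $j = k-1$.

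The main obstacle is the passage from leading-order coefficient matching — which yields the explicit constants in the statement at a glance — to a genuine $o(1)$ bound on $\varepsilon_k$. The $k^{-1}$ terms in the expanded recursion do not vanish on their own, so one must either push the ansatz one order further (with a $1/\sqrt{k}$ or $(\ln k)/\sqrt{k}$ subcorrection that absorbs the residual $1/k$ forcing) or work directly with the explicit product-form solution, after which a discrete Gronwall estimate of the form $\varepsilon_{k+1} = \bigl(1 + O(k^{-1/2})\bigr)\varepsilon_k + O(k^{-1-\delta})$ delivers $\varepsilon_k \to 0$ and completes the proof.
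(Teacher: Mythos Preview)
Your decomposition $t_k = \tfrac{1}{2}v_k H_k + r_k$ is a legitimate alternative to the paper's argument, and the derived recursion $r_{k+1} = \theta_k r_k - v_{k+1}/(2(2k+1))$ with $\theta_k=(1-p_k)/(1+p_k)$ is correct, as is the coefficient matching that singles out $c=\tfrac{3\pi}{16}-\tfrac14$. The paper proceeds differently: it subtracts from $t_k$ the \emph{exact} expression $\bigl(2k-\tfrac{1}{p_k}+\tfrac{\pi}{4}\bigr)\beta_k+\tfrac{3\pi}{16}-\tfrac{1}{4p_k}-\tfrac14$ (with $\beta_k=\tfrac12 H_k$), checks by direct computation that the remainder $\zeta_k$ satisfies $\zeta_{k+1}=\theta_k\zeta_k+(1-\theta_k)Z_k$ with $Z_k\to0$, and then uses the elementary averaging argument ($\prod_k\theta_k=0\Rightarrow\zeta_k\to0$). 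Using $1/p_k$ instead of $\sqrt{\pi k}$ and $\beta_k$ instead of its asymptotic means the paper never needs the sharpening of \eqref{eq_vk_approx} that your route requires; your route, in exchange, produces a cleaner intermediate recursion with no $H_k$ in the forcing.

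Where your proposal has a genuine slip is the closing step. After your ansatz the residual forcing in the $\varepsilon_k$ recursion is $O(k^{-1})$, and you propose to close via ``$\varepsilon_{k+1}=(1+O(k^{-1/2}))\varepsilon_k+O(k^{-1-\delta})$''. That estimate does not deliver $\varepsilon_k\to0$: a multiplier bounded only by $1+Ck^{-1/2}$ makes $\prod_j(1+Cj^{-1/2})$ grow like $e^{c\sqrt{k}}$, which no polynomial forcing can beat. The point you are missing is that the multiplier is \emph{strictly below one}, with $1-\theta_k\sim 2/\sqrt{\pi k}$. Write the residual forcing as $(1-\theta_k)Z_k$; since it is $O(k^{-1})$ you get $Z_k=O(k^{-1/2})\to0$, and then the contracting--average argument (exactly as in the paper: $|\varepsilon_{k+1}|\le \theta_k\cdots\theta_N|\varepsilon_N|+\sup_{j\ge N}|Z_j|$ together with $\prod\theta_k=0$) gives $\varepsilon_k\to0$ directly. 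No higher--order ansatz and no $k^{-1-\delta}$ hypothesis are needed; the same remark applies to the sharpening of \eqref{eq_vk_approx} you invoke as a lemma.
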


\begin{remark}
In can be noted that equations \eqref{eq_vk_approx} and \eqref{eq_tk_growth} give good approximations for $V^A(k,k)$ and $T^A(k,k)$ already for small values of $k$. The differences to the real values are less than 0.1 for $k>3$ and the relative differences less than 0.001 for $k>25$. 
\end{remark}

From equations \eqref{eq_vk_approx} and \eqref{eq_tk_growth} one can see that for large $k$ the dominating terms in $V^A(k,k)$ and $T^A(k,k)$ are $2k$ and $\frac{k}{2}\ln{k}$, respectively. Comparing this to formulas \eqref{eq_V0(k,k)} and \eqref{eq_Tkk_growth} we have the following result, where we use the standard notation $f(x) \sim g(x)$ to denote that $\lim_{x\to\infty} f(x)/g(x) = 1$. 

\begin{corollary}\label{cor_asym}
As $k\to\infty$,
\vspace{-0.5em}\begin{align*}
V^A(k,k) &\sim 2V(k,k), \\
T^A(k,k) &\sim T(k,k).
\end{align*}
\end{corollary}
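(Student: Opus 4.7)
The plan is to extract the leading-order behavior directly from the four asymptotic formulas already established in Propositions~\ref{formula_V0} and \ref{prop_Williams}, Corollary~\ref{cor_T(k,k)}, and Theorem~\ref{thm_tk_growth}. Since all four quantities $V(k,k)$, $V^A(k,k)$, $T(k,k)$, and $T^A(k,k)$ have already been pinned down to sufficient precision, the corollary reduces to a direct comparison of leading terms.

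For the first equivalence, I would combine the exact identity $V(k,k)=k$ from \eqref{eq_V0(k,k)} with \eqref{eq_vk_approx}, which gives $V^A(k,k) = 2k - \sqrt{\pi k} + \pi/4 + o(1)$. Dividing yields
\[
\frac{V^A(k,k)}{2\,V(k,k)} = 1 - \frac{1}{2}\sqrt{\tfrac{\pi}{k}} + \frac{\pi}{8k} + o(1/k) \;\longrightarrow\; 1,
\]
which is exactly $V^A(k,k) \sim 2V(k,k)$.

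For the second equivalence, I would expand the bracketed expression in \eqref{eq_tk_growth} term by term. The leading contribution is $\frac{k}{2}\ln k$; the next-order terms are $-\frac{\sqrt{\pi k}}{4}\ln k$ (of order $\sqrt{k}\ln k$) and $\frac{k}{2}(\ln 4 + \gamma)$ (of order $k$). Hence $T^A(k,k) = \frac{k}{2}\ln k + O(\sqrt{k}\ln k)$. From \eqref{eq_Tkk_growth} we similarly have $T(k,k) = \frac{k}{2}\ln k + O(k)$. Therefore
\[
\frac{T^A(k,k)}{T(k,k)} = \frac{\tfrac{k}{2}\ln k + O(\sqrt{k}\ln k)}{\tfrac{k}{2}\ln k + O(k)} \;\longrightarrow\; 1,
\]
establishing $T^A(k,k) \sim T(k,k)$.

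There is no substantive obstacle here, since the corollary is essentially a repackaging of the preceding asymptotic statements. The interpretive content worth emphasizing is the contrast it captures: although Policy~A asymptotically \emph{doubles} the expected final number of black balls, it does so without increasing the expected absorption time to leading order.
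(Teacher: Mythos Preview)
Your proposal is correct and follows exactly the paper's approach, which simply states that the result follows immediately from comparing equations \eqref{eq_V0(k,k)} and \eqref{eq_vk_approx}, and equations \eqref{eq_t0_growth} and \eqref{eq_tk_growth}. One minor slip: after correctly listing the term $\tfrac{k}{2}(\ln 4+\gamma)$ of order $k$, you then write $T^A(k,k)=\tfrac{k}{2}\ln k + O(\sqrt{k}\ln k)$; the remainder should be $O(k)$ since $k$ dominates $\sqrt{k}\,\ln k$, but this does not affect the conclusion because both are $o(k\ln k)$.
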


Thus, when there is an equal number of white and black balls initially, and this number gets very large, the expected time until the process stops will actually be of the same order of magnitude when using Policy~A as when removing no balls at all, whereas the expected number of final black balls will be close to twice as large when using Policy~A. 

As an illustration of the results in Corollary~\ref{cor_asym}, suppose that we start with 100 000 balls, half of them white and the other half black. If no balls are removed we can expect to get on average 50 000 black balls after a total of 319 582 steps, while using Policy~A the expected values are 99 604 black balls after 318 219 steps. A computer simulation of the A\nbd controlled M\nbd process with said starting conditions and $10^5$ test runs gave an average result of 99 605 black balls and 318 234 steps, in agreement with the expected values given by \eqref{eq_vk_approx} and \eqref{eq_tk_growth}, respectively. 

So far we have focused on the asymptotics in case there are initially equally many white and black balls. Now we will consider the case when they are not equal. It is clear that if $w>b$ then $T^A(w,b)=T^A(b,b)$ by the definition of Policy~A, and so Equation \eqref{eq_tk_growth} holds with $k=b$. On the other hand if $w<b$ then the asymptotic growth may be different. In fact, when $N\to\infty$ and the initial proportion of black balls is greater than $\frac{1}{2}$, the A\nbd controlled M\nbd process is almost surely equal to the non-controlled M\nbd process. This leads to the following result. 

\begin{proposition}\label{tw<b_growth}
Suppose that $b=\lceil xN \rceil$ and $w=\lfloor (1-x)N \rfloor$ for $N\in\ZZ^+$ and some fixed $\frac{1}{2}<x<1$. Then as $N\to\infty$,
\begin{align}
V^A(w,b) &\sim V(w,b) \sim N,\label{eq_Vw<b}\\
T^A(w,b) &\sim T(w,b) \sim \frac{N}{2} \ln\left(\frac{1}{2x-1}\right).\label{eq_Tw<b}
\end{align}
\end{proposition}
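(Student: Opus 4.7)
My plan splits into two pieces: (i) a coupling argument that identifies the $A$-controlled and uncontrolled $M$-processes with high probability, reducing the claims for $V^A$ and $T^A$ to the corresponding claims for $V$ and $T$; and (ii) direct asymptotic analysis of the explicit formulas in Propositions~\ref{formula_V0} and~\ref{formula_T0}. Step (ii) is where the real work lies.

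For (i), I would realise both processes on a common probability space using one sequence of uniform draws. Since $w < b$, Policy~A is inactive at time $0$ and first triggers only if the uncontrolled walk reaches a state with $W_n \geq \lceil N/2 \rceil$. By the strong Markov property and the binomial representation in \eqref{eq_P} of $\PP(W_H = N \mid W_0 = w)$, the probability of this event is $O(e^{-cN})$ by Chernoff since $b/N \to x > 1/2$. On the complementary event the two processes agree pathwise, so $B_H^A = B_H$ and $H^A = H$. A Cauchy--Schwarz bound using $\EE[H],\EE[H^A] = O(N\log N)$ together with the geometric tail of hitting times in finite absorbing Markov chains then gives $V^A(w,b) - V(w,b) \to 0$ and $T^A(w,b) - T(w,b) \to 0$, so it suffices to analyse the uncontrolled quantities.

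For the uncontrolled process, \eqref{eq_V0} together with \eqref{eq_P} gives $V(w,b) = N \cdot \PP(\mathrm{Bin}(N-1,1/2) \leq b-1)$, which tends to $N$ by the same Chernoff estimate, so $V(w,b) \sim N$. For $T(w,b)$ the key is an asymptotic evaluation of the double sum in \eqref{eq_T0}. Writing $S_i := \sum_{j=i}^{N-2-i} 1/\binom{N-2}{j}$ and $\alpha := i/N$, the consecutive ratio $1/\binom{N-2}{j+1} : 1/\binom{N-2}{j} = (j+1)/(N-2-j)$ equals $\alpha/(1-\alpha) + O(k/N)$ at $j = i+k$, so for $\alpha$ bounded in $(0, 1/2)$ the sequence decays geometrically with rate $\alpha/(1-\alpha)$ up to an exponentially small tail. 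Summing the geometric series and doubling by the symmetry of $\binom{N-2}{j}$ about $j = (N-2)/2$ yields
\begin{equation*}
S_i \sim \frac{2(1-\alpha)}{1-2\alpha}\,\binom{N-2}{i}^{-1}
\end{equation*}
uniformly on compact subsets of $\alpha \in (0, 1/2)$. Combined with the identity $\binom{N-1}{i}/\binom{N-2}{i} = (N-1)/(N-1-i)$, each term of the outer sum in \eqref{eq_T0} is asymptotic to $2/(1-2\alpha)$, so the outer sum is a Riemann sum for
\begin{equation*}
N \int_0^{1-x} \frac{2\,d\alpha}{1-2\alpha} = N\ln\frac{1}{2x-1}.
\end{equation*}
Multiplying by the prefactor $N/(2(N-1))$ in \eqref{eq_T0} gives $T(w,b) \sim \tfrac{N}{2}\ln(1/(2x-1))$, as claimed.

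The main obstacle is to make the asymptotic for $S_i$ uniform enough in $i$ to justify the Riemann-sum limit. Since $1-x < 1/2$, the integrand $2/(1-2\alpha)$ stays bounded on $[0, 1-x]$ and no singularity is crossed. The delicate region is $\alpha \to 0$, where $(1-\alpha)/(1-2\alpha) \to 1$ (so the asymptotic still gives the right value) but the geometric decay rate $\alpha/(1-\alpha)$ tends to $0$; the contribution of the $i = O(\log N)$ range to the outer sum is however only $O(\log N)$, hence negligible after dividing by $N$. Handling this together with the Cauchy--Schwarz estimate in step (i) are the two places where care is required.
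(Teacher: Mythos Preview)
Your coupling step (i) is essentially the paper's own argument: the paper also shows that the first time $G$ at which $W_n\geq B_n$ satisfies $\PP(G<\infty)\to 0$, via the inequality $\PP(B_H=0)\geq\tfrac12\PP(G<\infty)$ combined with a Stirling bound on the binomial tail in \eqref{eq_P}. Your Cauchy--Schwarz remark is in fact an improvement over the paper, which passes directly from $\PP(G<\infty)\to 0$ to $T^A\sim T$ without addressing the contribution of $H$ on the event $\{G<\infty\}$; your use of the geometric tail of $H,H^A$ closes that gap cleanly.

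Where you genuinely diverge is in step (ii). The paper does not analyse the double sum \eqref{eq_T0} at all; it simply cites Flajolet--Huillet for the asymptotic $T(w,b)\sim\tfrac{N}{2}\ln\frac{1}{2x-1}$, and for $V$ it uses the sandwich $V^A(\lfloor N/2\rfloor,\lfloor N/2\rfloor)\leq V^A(w,b)\leq N$ together with \eqref{eq_vk_approx}. Your route is self-contained: the geometric-decay estimate $S_i\sim\tfrac{2(1-\alpha)}{1-2\alpha}\binom{N-2}{i}^{-1}$, combined with $\binom{N-1}{i}/\binom{N-2}{i}=(N-1)/(N-1-i)$, does reduce the outer sum to a Riemann sum for $\int_0^{1-x}\frac{2\,d\alpha}{1-2\alpha}$, and your identification of the two delicate points (uniformity in $i$, and the boundary layer $i=O(\log N)$) is accurate. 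This buys independence from the external reference at the cost of the extra uniformity argument; the paper's version is shorter but not self-contained.
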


Thus, in this case both the expected time to absorption and the expected final number of black balls have an asymptotic growth that is linear in $N$, and the growth rates are the same for both the M\nbd process and the A\nbd controlled M\nbd process. The proof of Proposition~\ref{tw<b_growth} mainly consists of showing that these two processes are a.s. equal when $N\to\infty$, under the given assumptions. Note that the asymptotic growth of $T(w,b)$ in the M\nbd process was already proved in \cite{Flajolet}. 

Table~\ref{table_x} contains values for the mean time to absorption $T^A(w,b)$ obtained when simulating the A\nbd controlled M\nbd process for different values of total initial number of balls $N$ and initial proportion of black balls $x$. Each simulation was repeated 10 000 times. From the table it can be seen that for large $N$ the values in the second column ($x=0.5$) grow more rapidly than the rest. 

A plot of the simulation results is found in Figure~\ref{fig_xgrowth}, with some more points included than in Table~\ref{table_x}. The points in the graph represent the ratio $T^A(w,b)/N$ for five different values of $x$. The lines represent the theoretical asymptotics, as suggested by Theorem~\ref{thm_tk_growth} and Proposition~\ref{tw<b_growth}, and for large $N$ the fit is very good. Figure~\ref{fig_xgrowth} illustrates that for $x=0.5$ the growth is of the order $N \log N$, whereas for $x>0.5$ the growth is linear in $N$ when this $N$ gets large enough. 

\begin{table}
\begin{center}
\caption{Mean time to absorption in simulations for different values of total number of balls $N$ and ratio of black balls $x$ in the initial state of the A\nbd controlled M\nbd process.\vspace{0.5em}}
\label{table_x}
\begin{tabular}{rrrrrr}
\hline
$N$ & $x=0.5$ & $x=0.505$ & $x=0.55$ & $x=0.6$ & $x=0.75$ \\
\hline
200 & 296.77 & 299.59 & 249.67 & 168.91 & 70.07 \\
2000 & 4298.94 & 4246.77 & 2329.69 & 1616.19 & 694.21 \\
20000 & 55349.26 & 48374.06 & 23044.07 & 16099.79 & 6933.57 \\
200000 & 671511.56 & 463353.03 & 230265.43 & 160947.05 & 69318.11 \\
2000000 & 7879981.21 & 4608800.89 & 2302605.35 & 1609446.36 & 693150.51 \\
\hline
\end{tabular}
\end{center}
\end{table}

\begin{figure}
\begin{center}
\includegraphics[height=0.38\textwidth]{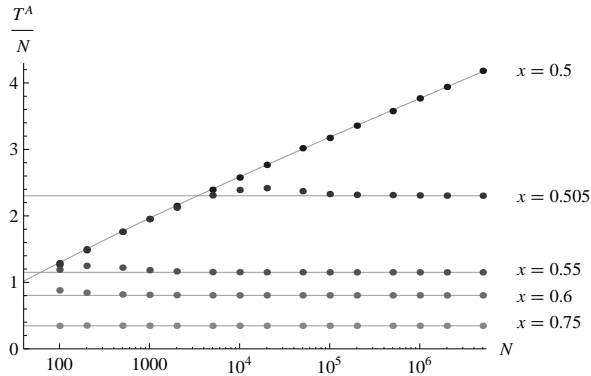}
\caption{Simulated values of $T^A(w,b)/N$ for different $N$ and initial proportion of black balls~$x$. Note the logarithmic scale on the $N$-axis. }
\label{fig_xgrowth}
\end{center}
\end{figure}

\subsection{Conditional M\nbd process}

We can define the M-process conditioned on the event that there are only black balls left at the end. It is natural to consider this conditional process on the state space $S=\{1,2,\dotsc,N\}$. The transition probabilities can be calculated using Bayes' formula. From Equation \eqref{eq_P} we know that the probability of reaching state $N$ before 0 in the M-process is
\begin{equation*}
h(n) := \PP(W_H=0 | W_0=N-n,B_0=n) = 2^{-(N-1)} \sum_{i=0}^{n-1}\binom{N-1}{i}.
\end{equation*}
Note that the function $h$ is harmonic for the M-process killed when it hits state 0 or N, and thus the conditional process can be seen as a Doob $h$-transform of the M-process. Hence the transition probabilities of the conditional M-process are given by
\begin{equation*}
P_{ik}^* := \frac{h(k)}{h(i)} P_{ik},
\end{equation*}
where $P_{ik}$ are the transition probabilities in the M\nbd process, i.e.
\begin{equation*}
P_{ik} = \PP(W_{n+1}=N-k, B_{n+1}=k | W_{n}=N-i, B_{n}=i)=
\begin{cases}
\frac{i}{N}&k=i+1,\\
1-\frac{i}{N}&k=i-1,\\
0&\text{otherwise}.
\end{cases}
\end{equation*}
Let the expected time to absorption in the conditional M\nbd process be 
\begin{equation*}
T_*(w,b) = \EE(H | W_0=w, B_0=b, W_H=0).
\end{equation*}

\begin{proposition}\label{Tstar}
For $k\in\ZZ^+$ it holds that 
\begin{equation*}
T_*(k,k) = T(k,k).
\end{equation*}
If $b=\lceil xN \rceil$ and $w=\lfloor (1-x)N \rfloor$ for $\frac{1}{2}<x<1$ and $N\in\ZZ^+$, then as $N\to\infty$,
\begin{equation*}
T_*(w,b) - T(w,b) \to 0.
\end{equation*}
\end{proposition}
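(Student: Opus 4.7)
\emph{Plan.} For the first statement, I would exploit the color-swap symmetry of the M-process started at $(k,k)$. Under the bijection $(w,b)\leftrightarrow(b,w)$ on states, the initial distribution is fixed, the transition probabilities are preserved, and the two absorbing states $(0,2k)$ and $(2k,0)$ are interchanged, while the hitting time $H$ is unaffected. Hence the conditional law of $H$ given $\{W_H=0\}$ coincides with that given $\{B_H=0\}$, so $\EE[H\mid W_H=0] = \EE[H\mid B_H=0]$. Combined with $\PP(W_H=0)=\PP(B_H=0)=\tfrac12$ and the total expectation formula, this forces $T(k,k)=\EE[H]=\EE[H\mid W_H=0]=T_*(k,k)$.

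For the asymptotic statement, my plan is first to write
\begin{equation*}
T_*(w,b) - T(w,b) = \frac{\PP(B_H=0)}{\PP(W_H=0)}\bigl(T(w,b) - T_{**}(w,b)\bigr),
\end{equation*}
where $T_{**}(w,b) := \EE[H\mid B_H=0]$. This identity follows by decomposing the unconditional expectation as $T(w,b) = \PP(W_H=0)T_*(w,b) + \PP(B_H=0)T_{**}(w,b)$ and rearranging. Since $\PP(W_H=0)\to 1$, it suffices to show that both $\PP(B_H=0)T(w,b)$ and $\PP(B_H=0)T_{**}(w,b) = \EE[H\,\mathbf{1}_{\{B_H=0\}}]$ vanish as $N\to\infty$.

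The probability $\PP(B_H=0) = 2^{-(N-1)}\sum_{i=0}^{w-1}\binom{N-1}{i}$ from Equation~\eqref{eq_P} decays exponentially: since $w/N\to 1-x<\tfrac12$, a Chernoff tail bound on the binomial distribution gives $\PP(B_H=0)\leq Ce^{-cN}$ for some $c=c(x)>0$. Using the linear growth $T(w,b) = O(N)$ from \eqref{eq_Tw<b}, the first product is $O(Ne^{-cN})\to 0$. For the second product, I would apply Cauchy--Schwarz,
\begin{equation*}
\EE[H\,\mathbf{1}_{\{B_H=0\}}] \leq \sqrt{\EE[H^2]}\,\sqrt{\PP(B_H=0)},
\end{equation*}
and bound the second moment polynomially in $N$ by a standard argument: with $M := \max_{w',b'}T(w',b') = O(N\log N)$ (attained in the symmetric regime by Corollary~\ref{cor_T(k,k)}), Markov's inequality together with the strong Markov property gives $\PP(H > 2kM)\leq 2^{-k}$ for every $k\geq 0$, and hence $\EE[H^2]\leq CM^2 = O(N^2\log^2 N)$. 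Combined with the exponential decay of $\PP(B_H=0)$, this yields $\EE[H\,\mathbf{1}_{\{B_H=0\}}] = O\bigl(N\log N \cdot e^{-cN/2}\bigr)\to 0$. The main technical obstacle is this second moment bound: one must control the tails of $H$ uniformly in the starting state, since an excursion could in principle wander into the symmetric region around $(N/2,N/2)$ where the unconditional hitting time is of the larger order $N\log N$.
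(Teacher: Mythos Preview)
Your proposal is correct and structurally matches the paper's argument: both parts rest on the decomposition $T(w,b)=h(b)\,T_*(w,b)+(1-h(b))\,T_*(b,w)$, with the symmetric case following immediately from $T_*(k,k)=T_*(k,k)$ (your color-swap argument is exactly this symmetry), and the asymptotic case following from $1-h(b)=\PP(B_H=0)\to 0$.

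The difference is in the level of rigor for the second claim. The paper simply writes
\[
T(w,b)-T_*(w,b)=(1-h(b))\bigl(T_*(b,w)-T_*(w,b)\bigr)\to 0
\]
and stops, implicitly relying on the exponential decay of $1-h(b)$ established in the proof of Proposition~\ref{tw<b_growth} together with some (unstated) polynomial control on the conditional expectations; the paper only gestures at this by remarking beforehand that an explicit formula for $T_*$ is available via Lemma~\ref{lemma_method}. You make this step watertight without computing $T_*(b,w)$: by rewriting $\PP(B_H=0)\,T_{**}(w,b)=\EE[H\,\mathbf{1}_{\{B_H=0\}}]$ and applying Cauchy--Schwarz together with the geometric-trials bound $\EE[H^2]=O(M^2)=O(N^2\log^2 N)$, you get the product to vanish against the exponential tail. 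This is a clean way to bypass any direct analysis of the $h$-transformed chain, and it actually fills a gap that the paper's written proof leaves open.
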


The proof is given in Section \ref{proofs_CM}. In the A\nbd controlled case there is no reason to consider a similar conditional process, since the description of Policy~A ensures that there will always be only black balls left when the process is absorbed.

\section{Outlook on some other strategies in the controlled urn model}\label{section_strategies}

The main results in this paper focus on Policy~A, which has been shown in \cite{Williams} to be optimal when aiming at maximizing the expected final number of black balls. However, the result in Corollary~\ref{cor_asym} shows that the expected time to absorption is asymptotically of the same order as in the (non-controlled) M\nbd process. It would be of interest to find a strategy that gives a significantly smaller expected time to absorption. 

There is an obvious way of minimizing the expected time to absorption, namely to instantly remove all of the white balls. Let us call this strategy Policy~R. Using this strategy the process terminates even before the very first step, so the time is always zero. On the other hand it is not a very good strategy for obtaining many black balls on average. If \emph{both} a large number of black balls \emph{and} a short time are desirable, then a reasonable option is having a strategy that interpolates between the two strategies A and R. In this section we discuss such a family of strategies, and we present some related problems that would be interesting to study further. 

The following is a generalization of Policy~A. Let $q\in(0,1)$ be the cutoff probability characterizing the strategy. If at time $n$ the probability of the next ball drawn being black is less than or equal to $q$ then remove white balls until said probability is greater than $q$, and otherwise do nothing. This strategy will be referred to as a $q$-strategy. The number of white balls removed after step $n$ is equal to $\max(W_n+B_n-\lceil B_n/q \rceil +1,0)$. Note that $q=\frac{1}{2}$ corresponds to Policy~A. 

It should be noted that a couple of $q$-strategies are studied in \cite{Lin}. The author considers a generalization of the Mabinogion urn model, with two additional parameters that determine the probability of a color change when a ball has been drawn from the urn. For two particular choices of these parameters either $q=\frac{1}{3}$ or $q=\frac{2}{3}$ (rather than Policy~A) is shown to be an optimal strategy in the Mabinogion sheep problem. It is an interesting open question how the optimal $q$-value could be found in the general case. However, we do not elaborate further on that question here since we focus on the usual Mabinogion urn model. 

How different $q$-strategies affect the time to absorption is demonstrated in Figure~\ref{fig_qtime}, which shows a plot of the mean absorption time as a function of $q$. The process was simulated $10^6$ times for each of 100 different $q$-values, starting with an equal number of white and black balls. This was done for three different values of total number of balls $N$. Note that the shape of the curve depends on the starting conditions. A larger number of balls in the urn results in a quicker drop for $q>0.5$, as well as a sharper and higher peak when $q$ is slightly below 0.5. It seems that for larger $N$ the peak lies closer to $q=0.5$, and it is an intriguing open problem to know precisely where the maximum is reached and how its value depends on $N$. 

\begin{figure}
\begin{center}
\includegraphics[height=0.38\textwidth]{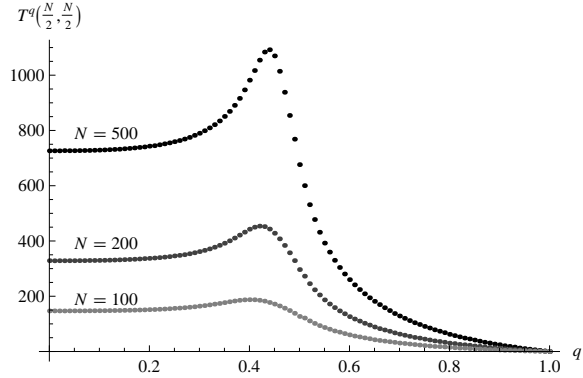}
\caption{Mean time to absorption for different $q$-strategies and three different values of $N$.}
\label{fig_qtime}
\end{center}
\end{figure}

One could imagine a situation where the value of the remaining black balls is decreasing with time, so that the final value at the absorption time H is equal to $e^{-\mu H} B_H$ for some discount factor $\mu$. If $\mu=0$ then Policy~A is the optimal strategy, as has been seen already. If instead $\mu$ gets large enough, then any other strategy will be inferior to Policy~R since even a single step will result in a lower final value than if the process is stopped immediately. For a small positive value of $\mu$, however, one may find a $q$-strategy which is better than both policies A and R. This is illustrated in Figure~\ref{fig_discountvalues} which shows a plot of the final value $e^{-\mu H}B_H$ as a function of the $\mu$ for a number of different $q$-strategies with $q$ between $\frac{1}{2}$ and 1. The values were obtained via simulations of the process starting with 50 white and 50 black balls, where each simulation was repeated $10^6$ times for 20 different values of $\mu$ ranging from 0 to 0.02. 

\begin{figure}
\begin{center}
\includegraphics[height=0.35\textwidth]{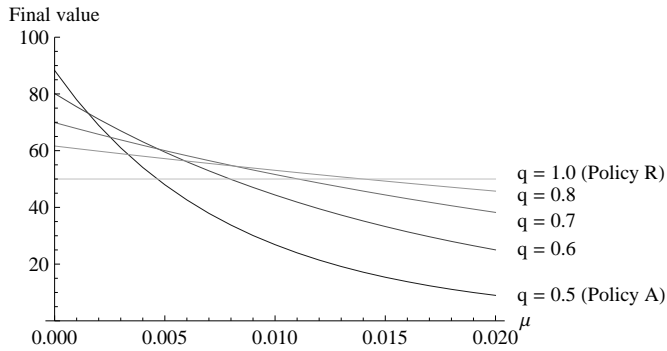}
\caption{Final value as a function of discount parameter $\mu$ using different $q$-strategies.}
\label{fig_discountvalues}
\end{center}
\end{figure}

As can be seen in the figure, initially Policy~A is the best strategy, but as $\mu$ grows the other $q$-strategies clearly become superior. This happens since the smaller average number of final black balls is compensated by a much shorter absorption time. Eventually, however, all strategies fall below the horizontal line corresponding to Policy~R. Judging from the picture it seems that for every $q$-strategy there is at least some value of $\mu$ for which that particular strategy is optimal compared to all the other ones. An interesting open question is therefore to find, given a fixed $\mu$, what value of $q$ gives an optimal strategy that maximizes the expected value of black balls at absorption. 

Using Lemma~\ref{lemma_method} it is possible to derive similar formulas for a $q$-strategy as in Proposition~\ref{prop_Williams} and~\ref{time_equations}. As an example, let the expected final number of black balls under a $q$-strategy be denoted $V^q(w,b)$, and let further $\phi (k) = \lceil k (1-q)/q\rceil$ be the smallest number of white balls for which some have to be removed if there are $k$ black balls present in the urn. When $\frac{1}{2}\leq q<1$ the value $V^q(\phi(k),k)$ satisfies the recursion
\begin{equation}\label{eq_vgamma}
V^q(\phi(k+1),k+1) = \frac{1-p_k^q}{1+p_k^q}V^q(\phi(k),k) + (\phi(k+1) + k) \frac{2p_k^q}{1+p_k^q},
\end{equation}
for all $k\geq 1$, with $V^q(\phi(1),1)=1$ and
\begin{equation*}
p_k^q = \frac{\binom{\phi(k+1)+k-1}{k}}{2\sum_{j=k}^{\phi(k+1)+k-1}\binom{\phi(k+1)+k-1}{j} - \binom{\phi(k+1)+k-1}{k}}. 
\end{equation*}
There are clear similarities between equations \eqref{eq_vk_recursion} and \eqref{eq_vgamma}. Note that when the value $q=\frac{1}{2}$ is inserted in $\phi(k)$ the formulas above coincide with the ones for Policy~A. Similar formulas can be derived for the expected time to absorption under a $q$-strategy. 

Proving Equation \eqref{eq_vgamma} and deriving asymptotic formulas for different $q$-strategies will be the subject of a forthcoming publication.

\section{Proof of the results in Section \ref{section_results}}\label{section_proofs}

\subsection{Key lemmas}\label{proofs_lemmas}

In this section, we state two technical lemmas that will be used extensively in the proofs of our main results. 

\begin{lemma}\label{lemma_method}
Assume that $X(k), k\in\ZZ$ satisfies the following recursion for all $a<k<b$:
\begin{equation}\label{eq_Xrecursion}
X(k) = p(k) X(k-1) + (1-p(k)) X(k+1) + r(k)
\end{equation}
where $p(k)$ and $r(k)$ are functions of $k$, with $p(k)\in(0,1)$. Then for any $a\leq c \leq b$,
\begin{equation}\label{eq_Xc}
X(c) = X(a) -Q(c) + \frac{\displaystyle\sum_{i=1}^{c-a} \prod_{m=1}^{i-1} \frac{p(a+m)}{1-p(a+m)}}{\displaystyle\sum_{i=1}^{b-a} \prod_{m=1}^{i-1} \frac{p(a+m)}{1-p(a+m)}}\big( X(b)-X(a) + Q(b) \big)
\end{equation}
where
\begin{equation}\label{eq_Q}
Q(k) = \sum_{i=1}^{k-a} \sum_{j=1}^{i-1} \frac{r(a+j)}{1-p(a+j)} \prod_{m=j+1}^{i-1} \frac{p(a+m)}{1-p(a+m)}.
\end{equation}
In particular, if $r\equiv 0$ also $Q\equiv 0$. 
\end{lemma}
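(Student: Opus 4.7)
The plan is to reduce the second-order boundary-value problem for \eqref{eq_Xrecursion} to a first-order linear recursion by taking forward differences. Specifically, I would rewrite \eqref{eq_Xrecursion} in the telescoped form
\[
(1-p(k))\bigl(X(k+1)-X(k)\bigr) = p(k)\bigl(X(k)-X(k-1)\bigr) - r(k),
\]
and introduce the differences $D(k) := X(k)-X(k-1)$ for $k > a$. Dividing by $1-p(k)$ this becomes the inhomogeneous first-order recursion
\[
D(k+1) = \frac{p(k)}{1-p(k)}\, D(k) - \frac{r(k)}{1-p(k)}, \qquad a<k<b,
\]
to which the standard integrating-factor method applies.

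Next, I would unwind this recursion from the (as-yet unknown) initial value $D(a+1)$ to obtain the closed form
\[
D(a+i) = D(a+1)\prod_{m=1}^{i-1}\frac{p(a+m)}{1-p(a+m)} - \sum_{j=1}^{i-1}\frac{r(a+j)}{1-p(a+j)}\prod_{m=j+1}^{i-1}\frac{p(a+m)}{1-p(a+m)}
\]
for every $i\geq 1$, using the convention that empty products equal $1$. Telescoping $X(c)-X(a) = \sum_{i=1}^{c-a} D(a+i)$ and comparing the double sum in $r$ with the definition \eqref{eq_Q} yields
\[
X(c) = X(a) + D(a+1)\sum_{i=1}^{c-a}\prod_{m=1}^{i-1}\frac{p(a+m)}{1-p(a+m)} - Q(c).
\]

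Finally, $D(a+1)$ is pinned down by enforcing this identity at $c=b$: solving for $D(a+1)$ gives $(X(b)-X(a)+Q(b))$ divided by the denominator sum appearing in \eqref{eq_Xc}, and substituting back produces \eqref{eq_Xc} verbatim. The statement that $r\equiv 0$ implies $Q\equiv 0$ is immediate from \eqref{eq_Q}. Conceptually the argument is routine linear algebra on the two-dimensional solution space of the associated homogeneous recursion; the only mild obstacle is matching the index ranges in the unwound particular solution ($i$ from $1$ to $c-a$, $j$ from $1$ to $i-1$, $m$ from $j+1$ to $i-1$) so that the resulting double sum assembles into precisely $Q(c)$ rather than some shifted variant.
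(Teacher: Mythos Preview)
Your proposal is correct and follows essentially the same approach as the paper's own proof: the paper also passes to the differences $Z(k):=X(k)-X(k-1)$, derives the identical first-order recursion $Z(k+1)=\frac{p(k)}{1-p(k)}Z(k)-\frac{r(k)}{1-p(k)}$, iterates backwards to express $Z(k)$ in terms of $Z(a+1)$, uses the telescoping sum $Z(a+1)+\dotsb+Z(b)=X(b)-X(a)$ to solve for $Z(a+1)$, and then sums $Z(a+1)+\dotsb+Z(c)$ to obtain \eqref{eq_Xc}. Your remark about matching the index ranges to recover exactly $Q(c)$ is the only bookkeeping the paper glosses over as ``rather tedious.''
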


\begin{proof}[Outline of proof]
The solution method is standard and corresponds for instance to the method used in \cite[pp.~151--156]{KarlinTaylor} for finding the functionals of a general random walk. Writing $Z(k) := X(k)-X(k-1)$ Equation \eqref{eq_Xrecursion} becomes 
\begin{equation*}
Z(k+1) = \frac{p(k)}{1-p(k)}Z(k) - \frac{r(k)}{1-p(k)}.
\end{equation*}
Backwards iteration gives an expression for $Z(k)$ as a function of $Z(a+1)$. Using that $Z(a+1) +\dotsc + Z(b) = X(b)-X(a)$ we can express $Z(k)$ in terms of $X(a)$ and $X(b)$. Forming the sum $Z(a+1) +\dotsc + Z(c)$ then gives the result in Equation \eqref{eq_Xc}. The procedure is not difficult but rather tedious, and not done in detail here.
\end{proof}

Using Lemma~\ref{lemma_method}, it is straightforward to obtain formulas for the expected final number of black balls in the M\nbd process as well as for the expected time until absorption. In the latter case there is a double sum like in Equation \eqref{eq_Q}, but in the special case when the initial numbers of white and black balls are equal this double sum can be much simplified using the following combinatorial results. 

\begin{lemma}\label{lemma_doublesum}
For $n=1,2,3,\dotsc$
\begin{align}
\frac{1}{2n}\sum_{j=0}^{n-1} \sum_{i=0}^{j} \frac{\binom{2n}{i}}{\binom{2n-1}{j}} \;&=\; \frac{1}{2}\sum_{i=0}^{n-1}\frac{1}{2i+1}, \label{eq_doublesum1}\\
\frac{1}{2n-1}\sum_{j=0}^{n-2} \sum_{i=0}^{j} \frac{\binom{2n-1}{i}}{\binom{2n-2}{j}} \;&=\; \frac{1}{2}\sum_{i=0}^{n-1}\frac{1}{2i+1} - \frac{2^{2n-2}}{n \binom{2n}{n}}. \label{eq_doublesum2}
\end{align}
\end{lemma}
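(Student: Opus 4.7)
My plan is to prove both identities jointly by induction on $n$, since reducing \eqref{eq_doublesum1} for index $n$ naturally generates sums of the shape appearing in \eqref{eq_doublesum2} (and vice versa), so neither identity will close on itself in a clean one-variable induction.

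The base case $n=1$ is immediate: the left side of \eqref{eq_doublesum1} evaluates to $\frac{1}{2}\cdot\binom{2}{0}/\binom{1}{0}=\frac{1}{2}$, matching the right-hand side; the left side of \eqref{eq_doublesum2} is an empty outer sum equal to $0$, and the right side equals $\frac{1}{2}-\frac{1}{1\cdot 2}=0$. For the inductive step, denote the two left-hand sides by $A_n$ and $C_n$, and set $B_n:=\frac{1}{2}\sum_{i=0}^{n-1}\frac{1}{2i+1}$, so that $B_n-B_{n-1}=\frac{1}{2(2n-1)}$. The goal reduces to matching one-step recursions:
\begin{align*}
A_n - A_{n-1} &= \frac{1}{2(2n-1)},\\
C_n - C_{n-1} &= \frac{1}{2(2n-1)} + \frac{2^{2n-4}}{(n-1)\binom{2n-2}{n-1}} - \frac{2^{2n-2}}{n\binom{2n}{n}}.
\end{align*}
To bring $A_n$ into a form comparable with $A_{n-1}$, I would first apply Pascal's rule $\binom{2n}{i}=\binom{2n-1}{i}+\binom{2n-1}{i-1}$ in the numerator, which after telescoping and reindexing collapses to the cleaner intermediate form $A_n = \frac{1}{2} + \frac{1}{n}\sum_{j=1}^{n-1}\binom{2n-1}{j}^{-1}\sum_{i=0}^{j-1}\binom{2n-1}{i}$. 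A second Pascal step, combined with the row-shift identity $\frac{1}{\binom{2n-1}{j}}=\frac{2n-1-j}{2n-1}\cdot\frac{1}{\binom{2n-2}{j}}$ in the denominator, then breaks the sum into a piece that is (a constant multiple of) $A_{n-1}$, a piece that is (a constant multiple of) $C_n$—where the induction hypothesis on the second identity supplies a closed form involving $p_n$—and an elementary residue. The residue collapses to $\frac{1}{2(2n-1)}$ by means of the standard evaluations $\sum_{i=0}^{n-1}\binom{2n-1}{i}=2^{2n-2}$ and $\binom{2n}{n}=\frac{2(2n-1)}{n}\binom{2n-2}{n-1}$. An entirely parallel reduction handles $C_n$; the $\frac{2^{2n-2}}{n\binom{2n}{n}}$ correction enters precisely because the central binomial coefficient survives the mirror symmetry $j\leftrightarrow 2n-2-j$, which in $C_n$ is broken by the shorter outer range $j\leq n-2$.

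The main obstacle will be the combinatorial bookkeeping: Pascal's rule fans the double sum out into many shifted sums with displaced index ranges, and the boundary contributions at $i=0$, $i=j$, and $j=n-1$ must be tracked carefully. The coupling between $A_n$ and $C_n$ is essential throughout, and a single sign error in the residue would destroy the cancellation that yields the $p_n$ correction. Should the symbolic manipulation prove unwieldy, a backup approach is the beta-integral representation $\frac{1}{\binom{N}{j}}=(N+1)\int_0^1 x^j(1-x)^{N-j}\,dx$: substituting this for $\binom{2n-1}{j}^{-1}$ and swapping sum with integral reduces $A_n$ to a single integral (via the geometric series $\sum_{j=i}^{n-1}(x/(1-x))^j$) whose apparent singularity at $x=1/2$ is either removable (for $A_n$) or contributes exactly $\frac{1}{4np_n}$ (for $C_n$), after which the integral can be evaluated term by term against $B_n$.
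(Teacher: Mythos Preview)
Your proposal is sound and targets the same one-step recursion as the paper---namely $A_n-A_{n-1}=\tfrac{1}{2(2n-1)}$, which the paper writes as $F(n+1)-F(n)=\tfrac{1}{2(2n+1)}$---but the route you take to verify that recursion is genuinely different. The paper rewrites the inner partial sum $\sum_{i\le j}\binom{2n}{i}$ as a value of the Gauss hypergeometric function ${}_2F_1(1,2n+1,2n+1-j;\tfrac12)$, expands $F(n+1)-F(n)$ as a difference of such sums, swaps the order of an infinite summation, and closes the computation with a Gould summation formula together with Gauss' contiguous relations; the second identity is then said to follow by an analogous hypergeometric calculation. Your plan instead stays entirely within finite binomial algebra: one Pascal step gives your correct intermediate form $A_n=\tfrac12+\tfrac{1}{n}\sum_{j=1}^{n-1}\binom{2n-1}{j}^{-1}\sum_{i<j}\binom{2n-1}{i}$, and a second Pascal step together with the row shift $\binom{2n-1}{j}^{-1}=\tfrac{2n-1-j}{2n-1}\binom{2n-2}{j}^{-1}$ feeds the expression into $C_n$ and $A_{n-1}$, so the two identities bootstrap one another. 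The trade-off is clear: your argument is more elementary and self-contained (no special functions, and the beta-integral fallback is also classical), but the bookkeeping of boundary terms at $i=0$, $i=j$, $j=n-1$ is delicate and must be executed carefully; the paper's hypergeometric approach hides that bookkeeping inside standard ${}_2F_1$ identities at the cost of invoking heavier machinery. One small point of order in your sketch: since you use $C_n$ (not $C_{n-1}$) when reducing $A_n$, make explicit that the joint induction establishes $C_n$ first (from $A_{n-1}$ and $C_{n-1}$) and only then $A_n$; otherwise the appeal to ``the induction hypothesis on the second identity'' for $C_n$ is circular.
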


\begin{proof}[Outline of proof]
The identities \eqref{eq_doublesum1} and \eqref{eq_doublesum2} can be verified by computer using creative telescoping or the related Wilf--Zeilberger method \cite[Chapters 6--7]{Wilf}. We give an outline of a proof using the hypergeometric function
\begin{equation}\label{eq_hyper}
{}_2F_1(a,b,c;z) := \frac{\Gamma(c)}{\Gamma(a) \Gamma(b)} \sum_{m=0}^\infty \frac{\Gamma(a+m)\Gamma(b+m)z^m}{\Gamma(c+n)\Gamma(m+1)}.
\end{equation}
However, since the calculations are rather tedious not all steps are shown in detail here. For a more detailed proof we instead refer to a forthcoming paper \cite{Stenlund}. 

Let the left hand side of Equation \eqref{eq_doublesum1} be denoted $F(n)$. The idea is to prove the recursion
\begin{equation}\label{eq_F_recursion}
F(n+1)-F(n) = \frac{1}{2(2n+1)}
\end{equation}
for all $n\geq 1$. First we use the identity
\begin{equation}\label{eq_binom_hyper}
\sum_{i=0}^{k} \binom{b}{i} = \frac{1}{2} \binom{b}{k} {}_2F_1\left(1,b+1,b+1-k;\tfrac{1}{2}\right),
\end{equation}
to rewrite $F(n)$ in terms of ${}_2F_1$ as
\begin{equation*}
F(n) = \sum_{j=0}^{n-1} \frac{{}_2F_1\left(1,2n+1,2n+1-j;\tfrac{1}{2}\right)}{2(2n-j)} = \sum_{j=n}^{2n-1} \frac{{}_2F_1\left(1,2n+1,j+2;\tfrac{1}{2}\right)}{2(j+1)}.
\end{equation*}
We then write $F(n+1)-F(n)$ as
\begin{align}
F(n+1)-F(n) &= \sum_{j=n+1}^{2n+1} \frac{{}_2F_1\left(1,2n+3,j+2;\tfrac{1}{2}\right)}{2(j+1)} - \sum_{j=n}^{2n-1} \frac{{}_2F_1\left(1,2n+1,j+2;\tfrac{1}{2}\right)}{2(j+1)} \nonumber\\
&= g(n) - \frac{{}_2F_1\left(1,2n+1,n+2;\tfrac{1}{2}\right)}{2(n+1)} + \frac{2}{2n+1},\label{eq_F1}
\end{align}
where
\begin{equation*}
g(n) = \sum_{j=n+1}^{2n-1} \frac{1}{2(j+1)}\bigg( {}_2F_1\left(1,2n+3,j+2;\tfrac{1}{2}\right) - {}_2F_1\left(1,2n+1,j+2;\tfrac{1}{2}\right)\bigg).
\end{equation*}
The last term in \eqref{eq_F1} is obtained by inserting $b=2n+2$ in \eqref{eq_binom_hyper} twice with $k=0$ and $k=1$, respectively. 

The hypergeometric functions in $g(n)$ are now written as infinite sums like in \eqref{eq_hyper}, then the summands are combined and the order of summation is changed. This gives an inner sum that can be evaluated using a known summation formula \cite[(2.2)]{Gould} so that we get
\begin{align}
g(n) &= \sum_{m=0}^\infty \, \frac{m(4n+3+m)\Gamma(2n+1+m)}{2^{m+1}\Gamma(2n+3)} \left(\sum_{j=n+1}^{2n-1} \,\frac{\Gamma(j+1)}{\Gamma(j+2+m)}\right)\nonumber\\
&= \sum_{m=0}^\infty \, \frac{(4n+3+m)\Gamma(2n+1+m)\Gamma(n+2)}{2^{m+1}\Gamma(2n+3)\Gamma(n+2+m)} - \sum_{m=0}^\infty \, \frac{(4n+3+m)}{2^{m+1}(2n+2)(2n+1)} \nonumber\\
&= \frac{(4n+3){}_2F_1\left(1,2n+1,n+2;\tfrac{1}{2}\right)}{2(2n+2)(2n+1)} + \frac{{}_2F_1\left(2,2n+2,n+3;\tfrac{1}{2}\right)}{4(2n+2)(n+2)} -\frac{2}{2n+1}\nonumber\\
&= \frac{{}_2F_1\left(1,2n+1,n+2;\tfrac{1}{2}\right)}{2(n+1)} - \frac{3}{2(2n+1)} ,\label{eq_F2}
\end{align}
after a number of simplifying steps. The last step follows from the identity
\begin{equation*}
{}_2F_1\left(2,b+1,c+1;\tfrac{1}{2}\right) = \frac{2c}{b}\left((b-2c+2){}_2F_1\left(1,b,c;\tfrac{1}{2}\right) + 2(c-1)\right),
\end{equation*}
which is found by repeated application of Gauss' relations for contiguous hypergeometric functions (see Section 15.2 in \cite{Abram}). 

Combining equations \eqref{eq_F1} and \eqref{eq_F2} gives the recursion in \eqref{eq_F_recursion} and Equation \eqref{eq_doublesum1} follows by induction after checking that it holds for $n=1$. A similar procedure can also be used to prove Equation \eqref{eq_doublesum2} as well as some other similar identities, for which we refer to \cite{Stenlund}. 
\end{proof}

\subsection{M\nbd process; proofs of Proposition~\ref{formula_V0}, Proposition~\ref{formula_T0} and Corollary~\ref{cor_T(k,k)}}\label{proofs_M}

We now turn to the proofs of the results given in Section~\ref{sub_uncontrolled}. From the description of the Mabinogion urn model it is clear that the value of $V(w,b)$ is specified for $w>0$ and $b>0$ by
\begin{equation}\label{eq_a3}
V(w,b) = \frac{w}{w+b} V(w+1,b-1) + \frac{b}{w+b} V(w-1,b+1), 
\end{equation}
and, moreover,
\begin{equation}\label{eq_a1}
V(0,b) = b, \quad V(w,0) = 0. 
\end{equation}
Likewise, it holds for $w>0$ and $b>0$ that
\begin{equation}\label{eq_ta3}
T(w,b) = 1 + \frac{w}{w+b} T(w+1,b-1) + \frac{b}{w+b} T(w-1,b+1),
\end{equation}
while
\begin{equation}\label{eq_ta1}
T(0,b) =  T(w,0) = 0. 
\end{equation}
Note that these two pairs of equations are similar, the differences being that in \eqref{eq_ta1} both boundary values are zero and in \eqref{eq_ta3} there is an additional constant term when compared to \eqref{eq_a3}. In the equations above we now have two recurrence relations \eqref{eq_a3} and \eqref{eq_ta3} with boundary conditions \eqref{eq_a1} and \eqref{eq_ta1}, respectively, which can be solved using Lemma~\ref{lemma_method}. 

\begin{proof}[Proof of Proposition~\ref{formula_V0}]
Note that, in the non-controlled M\nbd process, the total number of balls in the urn, $N$, does not vary with time. We can thus write $w=N-b$, and, for all $0<b<N$, we have
\begin{equation*}
V(N-b,b) = \frac{N-b}{N} V(N-(b-1),b-1) + \frac{b}{N} V(N-(b+1),b+1),
\end{equation*}
with the boundary conditions being $V(N,0)=0$ and $V(0,N)=N$. Applying Lemma~\ref{lemma_method} to this recursion with $p(b)=(N-b)/N$ and $r\equiv 0$ gives after simplification that
\begin{equation*}
V(N-b,b) = N\cdot 2^{-(N-1)}\sum_{i=0}^{b-1} \binom{N-1}{i}.
\end{equation*}
Inserting $N=w+b$ gives Equation \eqref{eq_V0}. When $w$ and $b$ are both equal to $k$ this equation becomes
\begin{equation*} 
V(k,k) = 2k \cdot 2^{-(2k-1)} \cdot 2^{2k-2}= k,
\end{equation*}
proving Equation \eqref{eq_V0(k,k)}.
\end{proof}

\begin{proof}[Proof of Proposition~\ref{formula_T0}]
Using Lemma~\ref{lemma_method} again, we get that
\begin{equation*}
T(N-b,b) = Q(N) 2^{-(N-1)}\sum_{i=0}^{b-1} \binom{N-1}{i} - Q(b),
\end{equation*}
where
\begin{equation*}
Q(b) = \sum_{i=1}^{b} \sum_{j=1}^{i-1} \frac{N}{j} \prod_{m=j+1}^{i-1} \frac{N-m}{m} 
= \frac{N}{N-1} \sum_{i=0}^{b-1} \sum_{j=0}^{i-1} \frac{\binom{N-1}{i}}{\binom{N-2}{j}}.
\end{equation*}
In particular, $Q(N)$ becomes
\begin{align*}
Q(N) &= \frac{N}{N-1} \sum_{i=0}^{N-1} \sum_{j=0}^{i-1} \frac{\binom{N-1}{i}}{\binom{N-2}{j}}
= \frac{N}{N-1}\cdot\frac{1}{2}\left(  \sum_{i=0}^{N-1} \sum_{j=0}^{i-1} \frac{\binom{N-1}{i}}{\binom{N-2}{j}} + \sum_{i=0}^{N-1} \sum_{j=i}^{N-2} \frac{\binom{N-1}{i}}{\binom{N-2}{j}} \right) \\
&= \frac{N}{2(N-1)} \sum_{i=0}^{N-1} \sum_{j=0}^{N-2} \frac{\binom{N-1}{i}}{\binom{N-2}{j}} 
= \frac{2^{N-2}N}{N-1} \sum_{j=0}^{N-2} \frac{1}{\binom{N-2}{j}}
\end{align*}
so the formula for $T(N-b,b)$ can be rewritten as
\begin{align*}
T(N-b,b) &= \frac{N}{2(N-1)} \sum_{i=0}^{b-1}\sum_{j=0}^{N-2} \frac{\binom{N-1}{i}}{\binom{N-2}{j}} - \frac{N}{N-1} \sum_{i=0}^{b-1} \sum_{j=0}^{i-1} \frac{\binom{N-1}{i}}{\binom{N-2}{j}} \\
&= \frac{N}{2(N-1)} \sum_{i=0}^{b-1} \sum_{j=i}^{N-2-i}\frac{\binom{N-1}{i}}{\binom{N-2}{j}},
\end{align*}
where it is assumed that $b\leq N-b$ so that in the inner sum $i \leq N-2-i$. Insertion of $N=w+b$ now proves Equation \eqref{eq_T0} when $b \leq w$. For the other case we note that $T(w,b)=T(b,w)$ due to symmetry, so using the same method we see that \eqref{eq_T0} holds also when $w \leq b$. 

Finally inserting $w=b=k$ in \eqref{eq_T0} gives that
\begin{align*}
T(k,k) &= \frac{2k}{2(2k-1)} \sum_{i=0}^{k-1}\sum_{j=i}^{2k-2-i}\frac{\binom{2k-1}{i}}{\binom{2k-2}{j}} \\
&= \frac{k}{2k-1} \sum_{i=0}^{k-1}\binom{2k-1}{i} \left(2\sum_{j=i}^{k-2}\frac{1}{\binom{2k-2}{j}} + \frac{1}{\binom{2k-2}{k-1}}\right) \\
&= 2k \left( \frac{1}{2k-1} \sum_{j=0}^{k-2}\sum_{i=0}^{j} \frac{\binom{2k-1}{i}}{\binom{2k-2}{j}} + \frac{2^{2k-2}}{k \binom{2k}{k}} \right) = k \sum_{i=0}^{k-1} \frac{1}{2i+1}, 
\end{align*}
where the last step follows from Equation \eqref{eq_doublesum2} in Lemma~\ref{lemma_doublesum}. This proves Equation \eqref{eq_t0_growth}. 
\end{proof}

According to Equation \eqref{eq_t0_growth} we can express $T(k,k)$ as $k$ times the sum of the first $k$ ``odd'' terms in the harmonic series. Since it is well known that the harmonic series grows logarithmically, we get the asymptotic formula in Corollary~\ref{cor_T(k,k)}: 

\begin{proof}[Proof of Corollary~\ref{cor_T(k,k)}]
Since the harmonic numbers $H_n=1+\frac{1}{2}+\dots+\frac{1}{n}$ satisfy
\begin{equation*}
H_n = \ln(n) + \gamma + \frac{1}{2n} + O\left(\frac{1}{n^2}\right),
\end{equation*}
where $\gamma$ is the Euler--Mascheroni constant, it follows from \eqref{eq_t0_growth} that
\begin{align}\label{eq_Tkk_bigO}
T(k,k)&=k\left(H_{2k}-\frac{1}{2}H_k\right) \nonumber\\
&= k\left(\frac{\ln k}{2} + \ln{2} + \frac{\gamma}{2} \right) + O\left(\frac{1}{k}\right),
\end{align}
which proves Equation \eqref{eq_Tkk_growth}. 
\end{proof}

\subsection{A\nbd controlled M\nbd process; proofs of Proposition~\ref{time_equations}, Theorem~\ref{thm_tk_growth}, Corollary~\ref{cor_asym} and Proposition~\ref{tw<b_growth}}\label{proofs_AM}

We now turn to the A\nbd controlled M\nbd process. The formulas in Proposition~\ref{prop_Williams} are not proved here, since they are already included in \cite{Williams}, but the proof is easily done in the same way as for Proposition~\ref{time_equations} below. 

Note that under Policy~A Equation \eqref{eq_ta3} is valid only when $w<b$, whereas for $w\geq b$ we instead have that $T^A(w,b) = T^A(w-1,b)$, so there are two separate cases to consider depending on the starting point. If $w\geq b$ we can write $T^A(w,b) = T^A(b,b) = t_b$. On the other hand, if $w<b$ then no balls are removed as long as there are more black ones, and so the value of $T^A(w,b)$ can be determined using Lemma~\ref{lemma_method} with respect to the boundary value $t_k$, where $k= \left\lfloor \frac{w+b}{2} \right\rfloor$. There are two different formulas given in Proposition~\ref{time_equations}, depending on whether the initial total number of balls is even or odd. 

\begin{proof}[Proof of Proposition~\ref{time_equations}]
The results follow from Lemma~\ref{lemma_method}. First suppose that the total number of balls is even. For $k=(w+b)/2$ there is some $c\in\{1,2,\dotsc,k-1\}$ so that $w=k-c$ and $b=k+c$. The recursion 
\begin{equation*}
T^A(k-c,k+c) = 1 + \frac{k-c}{2k} T^A(k-(c-1),k+c-1) + \frac{k+c}{2k} T^A(k-(c+1),k+c+1)
\end{equation*}
holds for all $0<c<k$, with the boundary values being $T^A(0,2k)=0$ and $T^A(k,k) = t_k$. From Lemma~\ref{lemma_method} it follows that for all $0\leq c \leq k$
\begin{equation*}
T^A(k-c,k+c) = t_k + (Q_1(k)-t_k) 2^{-(2k-2)} \sum_{j=k}^{k+c-1}\binom{2k-1}{j} - Q_1(c),
\end{equation*}
where the function $Q_1(c)$ is given by
\begin{align*}
Q_1(c) &= \sum_{i=2}^{c} \sum_{j=1}^{i-1} \frac{2k}{k+j} \prod_{l=j+1}^{i-1} \frac{k-l}{k+l} 
= 2k \sum_{i=2}^{c} \sum_{j=1}^{i-1} \frac{\binom{2k-1}{k+i-1}}{(2k-1)\binom{2k-2}{k+j-1}} \\
&= \frac{2k}{2k-1} \sum_{i=k+1}^{k+c-1} \sum_{j=k}^{i-1} \frac{\binom{2k-1}{i}}{\binom{2k-2}{j}} = 2k\alpha_k(c), 
\end{align*}
with $\alpha_k(n)$ defined in \eqref{eq_alphabeta}. Similarly, for an odd number of balls the recursion is 
\begin{equation*}
T^A(k+1-c,k+c) = 1 + \frac{k+1-c}{2k+1} T^A(k+2-c,k+c-1) + \frac{k+c}{2k+1} T^A(k-c,k+c+1)
\end{equation*}
where $0<c<k+1$ and the boundary values are $T^A(0,2k+1) = 0$ and $T^A(k+1,k) = t_k$. In this case Lemma~\ref{lemma_method} gives 
\begin{equation*}
T^A(k+1-c,k+c) = t_k + (Q_2(k+1)-t_k) \frac{2^{-(2k-1)}}{1+p_k} \; \sum_{i=k}^{k+c-1}\binom{2k}{i} - Q_2(c)
\end{equation*}
for $0\leq c \leq k+1$, with
\begin{align*}
Q_2(c) &= \sum_{i=2}^{c} \sum_{j=1}^{i-1} \frac{2k+1}{k+j} \prod_{l=j+1}^{i-1} \frac{k-l+1}{k+l} 
= (2k+1) \sum_{i=2}^{c} \sum_{j=1}^{i-1} \frac{\binom{2k}{k+i-1}}{(2k-1)\binom{2k-1}{k+j-1}} \\
&= \frac{2k+1}{2k} \sum_{i=k+1}^{k+c-1} \sum_{j=k}^{i-1} \frac{\binom{2k}{i}}{\binom{2k-1}{j}} = (2k+1)\beta_k(c),
\end{align*}
where $\beta_k(c)$ is defined in \eqref{eq_alphabeta}. This proves \eqref{eq_ time1} and \eqref{eq_ time2}. These formulas make it possible to calculate $T^A(w,b)$ for arbitrary $w$ and $b$ in terms of $t_k$. We show now that $t_k$ satisfy the recursive equation in \eqref{eq_time_recursion}. Since $T^A(k+1,k+1) = T^A(k,k+1)$ and $\beta_k(1)=0$ for all $k$ we get by inserting the value $c=1$ into \eqref{eq_ time2} that
\begin{equation*}
t_{k+1} = \frac{1-p_k}{1+p_k}t_k + (2k+1)\beta_k \frac{2p_k}{1+p_k}, 
\end{equation*}
where we use the notation $\beta_k := \beta_k(k+1)$. Note that compared to \eqref{eq_vk_recursion} the only difference is the factor $\beta_k$, which according to \eqref{eq_doublesum1} is equal to
\begin{equation}\label{eq_betak}
\beta_k := \beta_k(k+1) = \frac{1}{2k} \sum_{i=k+1}^{2k} \sum_{j=k}^{i-1} \frac{\binom{2k}{i}}{\binom{2k-1}{j}} = \frac{1}{2k} \sum_{j=0}^{k-1} \sum_{i=0}^{j} \frac{\binom{2k}{i}}{\binom{2k-1}{j}} = \frac{1}{2}\sum_{i=0}^{k-1} \frac{1}{2i+1},
\end{equation}
proving the last part of the proposition. 
\end{proof}

Using the results in Proposition~\ref{time_equations} the value of $T^A(w,b)$ can be exactly determined for arbitrary values of $w$ and $b$. For large numbers, however, the calculations get demanding and an asymptotic expression might be more useful. We therefore turn to the asymptotics in the A\nbd controlled M\nbd process and first give a proof for Theorem~\ref{thm_tk_growth}, which contains an asymptotic expression for $T^A(k,k)$ up to constant order term. 

\begin{proof}[Proof of Theorem~\ref{thm_tk_growth}]
Recall the notation $t_k:=T^A(k,k)$ and further define the function $\zeta_k$ by
\begin{equation*}
\zeta_k := t_k-\left(2k-\frac{1}{p_k}+\frac{\pi}{4}\right)\beta_k - \frac{3\pi}{16}+\frac{1}{4p_k}+\frac{1}{4}. 
\end{equation*}
with $p_k$ as in \eqref{eq_pk} and $\beta_k$ as in \eqref{eq_betak}. A recursion for $t_k$ is given in \eqref{eq_time_recursion}. Notice also that
\begin{equation*}
\beta_{k+1} = \beta_k + \frac{1}{4k+2}, \quad p_{k+1}=\frac{2k+1}{2k+2}p_k.
\end{equation*}
With the help of these equations $\zeta_{k+1}$ can be expressed recursively as
\begin{align}
\zeta_{k+1} &= t_{k+1}-\left(2k+2-\frac{1}{p_{k+1}}+\frac{\pi}{4}\right)\beta_{k+1} - \frac{3\pi}{16}+\frac{1}{4p_{k+1}}+\frac{1}{4} \nonumber\\[0.5em]
&= \frac{1-p_k}{1+p_k}t_k + (2k+1)\beta_k \frac{2p_k}{1+p_k} - \left(2k+2-\frac{1}{p_{k+1}}+\frac{\pi}{4}\right)\left(\beta_k + \frac{1}{4k+2}\right) \nonumber\\
&\hspace{3em} - \frac{3\pi}{16}+\frac{1}{4p_{k+1}}+\frac{1}{4} \nonumber\\[0.8em]
&= \frac{1-p_k}{1+p_k}\left( t_k-\left(2k-\frac{1}{p_k}+\frac{\pi}{4}\right)\beta_k - \frac{3\pi}{16}+\frac{1}{4p_k}+\frac{1}{4} \right) \nonumber\\
&\hspace{3em} + \beta_k\left( \frac{2p_k}{1+p_k}\left(1-\frac{\pi}{4}\right) -2+\frac{1}{p_{k+1}}-\frac{1-p_k}{p_k(1+p_k)} \right) - \frac{1-p_k}{4p_k(1+p_k)} \nonumber\\
&\hspace{3em} - \frac{1}{4k+2} \left(2k+2-\frac{1}{p_{k+1}}+\frac{\pi}{4}\right) + \frac{1}{4p_{k+1}} + \frac{2p_k}{1+p_k}\left( \frac{1}{4} - \frac{3\pi}{16} \right) \nonumber\\[0.8em]
&= \frac{1-p_k}{1+p_k}\zeta_k + \frac{2p_k}{1+p_k} Z_k, \label{eq_Bcdefg}
\end{align}
where in the last step we have introduced $Z_k := \beta_kc_k + d_k + e_k + f_k + g_k$, with
\begin{align*}
&c_k := \frac{p_k-p_{k+1}}{2p_k^2 p_{k+1}}-\frac{\pi}{4}, \quad
d_k :=  \frac{(1-p_k)(p_k-p_{k+1})}{8p_k^2 p_{k+1}} - \frac{\pi}{16}, \\
&e_k :=  \frac{1+p_k}{4p_k p_{k+1}(2k+1)} - \frac{\pi}{8}, \quad
f_k :=  \frac{1}{4}-\frac{p_k}{4p_{k+1}}, \quad
g_k :=  - \frac{\pi (1+p_k)}{16(2k+1)p_k}.
\end{align*}
The key thing is that $\beta_k c_k, d_k,e_k,f_k$ and $g_k$ all tend to zero as $k\to\infty$. Indeed, using Stirling's approximation
\begin{equation*}
n! = \sqrt{2\pi n} \left( \frac{n}{e} \right)^n \left( 1 + O\left( \frac{1}{n}\right) \right)
\end{equation*}
we get that
\begin{equation}\label{eq_pk_approx}
p_k = \frac{1}{\sqrt{\pi k}} \left( 1 + O\left( \frac{1}{k}\right) \right)
\end{equation}
and from this follows that $c_k, f_k = O(1/k)$ while $d_k, e_k, g_k = O(1/\sqrt{k})$. Furthermore, from \eqref{eq_Tkk_growth} and $T(k,k)=2k\beta_k$ we know that $\beta_k = O(\ln(k))$, so it follows that $\beta_kc_k = O(\ln(k)/k)$. This means that $Z_k\to 0$ as $k\to\infty$. Now let $\theta_k := (1-p_k)/(1+p_k)$ and rewrite \eqref{eq_Bcdefg} as
\begin{equation*}
\zeta_{k+1} = \theta_k\zeta_k + (1-\theta_k) Z_k.
\end{equation*}
For any $\varepsilon>0$ one can find $N$ such that $|Z_k|<\varepsilon$ for every $k\geq N$. By induction on $k\geq N$, we now get that
\begin{equation} \label{ineq_zeta}
|\zeta_{k+1}| \leq \theta_k \theta_{k-1}\dotsm \theta_N |\zeta_N| + \varepsilon.
\end{equation}
According to Stirling's formula $1-\theta_k \sim 2/(1+\sqrt{\pi k}) \geq 1/\sqrt{\pi k}$, which means that the infinite series $\sum (1-\theta_k) = \infty$. By the comparison test, this implies that also the series $\sum \ln \theta_k$ diverges, meaning in turn that $\prod \theta_k=0$. Thus, from Equation \eqref{ineq_zeta} it follows that $\lim\sup |\zeta_{k+1}| \leq \varepsilon$ when $k\to\infty$. Since this holds for any arbitrarily small $\varepsilon$ we have that $\zeta_k \to 0$. 

Finally, since $T(k,k)=2k\beta_k$ it follows that we can replace $\beta_k$ with $\frac{1}{2k}$ times the expression in \eqref{eq_Tkk_bigO}, and similarly we can replace $p_k^{-1}$ using Equation \eqref{eq_pk_approx}. Making these insertions in the expression for $\zeta_k$ gives the result in \eqref{eq_tk_growth}. 
\end{proof}

\begin{proof}[Proof of Corollary~\ref{cor_asym}]
The result follows immediately when comparing equations \eqref{eq_V0(k,k)} and \eqref{eq_vk_approx} as well as equations \eqref{eq_t0_growth} and \eqref{eq_tk_growth}. 
\end{proof}

From Corollary~\ref{cor_asym} it is easy to see how the asymptotic behavior differs in the M\nbd process and the A\nbd controlled M\nbd process when starting with an equal number of white and black balls. When the ratio of black balls in the initial state is greater than $\frac{1}{2}$ the asymptotic behavior is different, as stated in Proposition \ref{tw<b_growth}. 

\begin{proof}[Proof of Proposition~\ref{tw<b_growth}]
Assume that the A\nbd controlled M\nbd process is started from $b=\lceil xN \rceil$ and $w=\lfloor (1-x)N \rfloor$ such that $b>w>0$, i.e. $\frac{1}{2}<x<1$. First we prove that the M\nbd process and the A\nbd controlled process are a.s. equal when $N\to\infty$. Note that the two processes differ only if $W_n\geq B_n$ for some $n$. Let
\begin{equation*}
G:=\min\{n:W_n\geq B_n\}
\end{equation*}
be the first time this event occurs, with $G=\infty$ if it never happens. Note that since $w<b$ we always have that $G>0$. We want to find the probability that $G$ is finite. 

In view of Equation \eqref{eq_P}, we get for all $w<b$ that the probability of getting no final black balls is
\begin{equation*}
\PP(B_H=0) = \frac{1}{2^{N-1}} \sum_{i=0}^{w-1}\binom{N-1}{i} \leq \frac{w}{2^{N-1}}\binom{N-1}{w-1} < \frac{w}{2^{N}}\binom{N}{w}.
\end{equation*}
Using Stirling's approximation and the fact that $w\sim (1-x)N$ we now get that
\begin{equation*}
\PP(B_H=0) \sim \sqrt{\frac{1}{2\pi} \left( \frac{1}{x}-1\right)}\cdot \frac{\sqrt{N}}{(2x^x(1-x)^{1-x})^N}.
\end{equation*}
On the interval $0<x<1$ the function $2x^x(1-x)^{1-x}$ is real valued and attains a unique local minimum in $x=\frac{1}{2}$, for which the value is 1. Since we have assumed that $\frac{1}{2}<x<1$ the base in the denominator is strictly greater than 1, so $\PP(B_H=0)\to 0$ as $N\to\infty$. 

By the law of total probability we have that
\begin{equation*}
\PP(B_H=0) = \sum_{n=1}^\infty \PP(B_H=0 | G=n)\PP(G=n) + \PP(B_H=0 | G=\infty)\PP(G=\infty).
\end{equation*}
If $G=\infty$ then there are never more white balls than black, so $\PP(B_H=0 | G=\infty)=0$. On the other hand, if $G=n<\infty$ we can use the Markov property and restart the M\nbd process from $(W_n,B_n)$, and since $W_n\geq B_n$ we have that $\PP(B_H=0 | G=n) \geq \frac{1}{2}$ (see also Equation \eqref{eq_P}). This means that
\begin{equation*}
\PP(B_H=0) \geq \frac{1}{2}\sum_{n=1}^\infty \PP(G=n) = \frac{\PP(G<\infty)}{2},
\end{equation*}
which in turn means that also $\PP(G<\infty)\to 0$ as $N\to\infty$. 

Thus, when $N\to\infty$, almost surely $W_n< B_n$ for all $n$, so no balls are removed even if Policy~A is applied throughout the process. This means that there will (almost surely) be no difference between the M\nbd process and the A\nbd controlled M\nbd process. Naturally we then have that $V(w,b)\sim V^A(w,b)$ and $T(w,b)\sim T^A(w,b)$. 

Since $w\leq\left\lfloor\tfrac{N}{2}\right\rfloor<b$ we clearly get from \eqref{eq_vk_approx} that 
\begin{equation*}
V^A(w,b)\geq V^A\left(\left\lfloor\tfrac{N}{2}\right\rfloor,\left\lfloor\tfrac{N}{2}\right\rfloor\right) \sim N,
\end{equation*}
and obviously $V^A(w,b)\leq N$, so Equation \eqref{eq_Vw<b} holds. Equation \eqref{eq_Tw<b} follows from 
\begin{equation*}
T(w,b) \sim \frac{N}{2} \ln\left(\frac{1}{1-2(1-x)}\right),
\end{equation*}
which is proved in \cite{Flajolet} Theorem~3 and Note~3(iii). 
\end{proof}

\subsection{Conditional M\nbd process; proof of Proposition \ref{Tstar}}\label{proofs_CM}

An explicit expression for $T_*(w,b)$ can be found using Lemma~\ref{lemma_method}, which gives an expression rather similar to Equation \eqref{eq_T0} but more complicated. 

\begin{proof}[Proof of Proposition \ref{Tstar}]
Note that 
\begin{equation*}
T(w,b) = T_*(w,b)h(b) + T_*(b,w)(1-h(b)),
\end{equation*}
so when $w=b=k$ we get that 
\begin{equation*}
T(k,k) = T_*(k,k)h(k) + T_*(k,k)(1-h(k)) = T_*(k,k).
\end{equation*}
Furthermore, when $b=\lceil xN \rceil$ with $x>\frac{1}{2}$ we have seen in the proof of Proposition~\ref{tw<b_growth} that $1-h(b)=\PP(B_H=0)\to0$, which means that
\begin{equation*}
T(w,b)-T_*(w,b) = (1-h(b))(T_*(b,w)-T_*(w,b)) \to 0,
\end{equation*}
completing the proof. 
\end{proof}


\acks

I am very grateful to Prof. Paavo Salminen for encouraging me to work with the Mabinogion urn model, as well as for many helpful discussions. I am also grateful to Prof. James Wan and Prof. Christophe Vignat for the collaboration, and in particular to Prof. Wan for providing a proof of Lemma~\ref{lemma_doublesum}. I also want to thank the anonymous referees for their valuable comments and suggestions for improvements.


\begin{thebibliography}{99}
\footnotesize

\bibitem{Abram} {\sc Abramowitz, M. and Stegun, I. A.} (Eds.). (1965). {\em Handbook of Mathematical Functions}. Dover Publications, New York.  

\bibitem{Reid} {\sc Bharucha-Reid, A. T.} (1960). {\em Elements of the Theory of Markov Processes and Their Applications}, McGraw-Hill, New York. 

\bibitem{Chan} {\sc Chan, T.} (1996). Some diffusion models for the Mabinogion sheep problem of Williams. {\AAP} {\bf 28,} 763--783.  

\bibitem{DavisNorman} {\sc Davis, M. H. A. and Norman, A. R.} (1990). Portfolio selection with transaction costs. {\MOR} {\bf 15,} 676--713. 

\bibitem{Flajolet2} {\sc Flajolet, P., Dumas, P. and Puyhaubert, V.} (2006). Some exactly solvable models of urn process theory. {\em Discrete Math. Theor. Comput. Sci. (Proc.)} {\bf AG,} 59--118.

\bibitem{Flajolet} {\sc Flajolet, P. and Huillet, T.} (2008). Analytic combinatorics of the Mabinogion urn. {\em Discrete Math. Theor. Comput. Sci. (Proc.)} {\bf AI,} 549--572.

\bibitem{Gould} {\sc Gould, H. W.} (1972). {\em Combinatorial Identities}. West Virginia University, Morgantown, WV.

\bibitem{KarlinTaylor} {\sc Karlin, S. and Taylor, H. M.} (1998). {\em An Introduction to Stochastic Modeling}, 3rd~edn. Academic Press, San Diego, CA. 

\bibitem{Kingman} {\sc Kingman, J. F. C.} (1999). Martingales in the OK Corral. {\em Bull. London Math. Soc.} {\bf 31,} 601--606.

\bibitem{Lin} {\sc Lin, Y.-S.} (2016). A generalization of the Mabinogion sheep problem of D. Williams. {\JAP} {\bf 53,} 1240--1256.  

\bibitem{Mabinogion} {\em The Mabinogion} (C. E. Guest, Trans.). (1997). Dover Publications, Mineola, NY. 

\bibitem{Wilf} {\sc Petkov{\v s}ek, M., Wilf, H. S. and Zeilberger, D.} (1996). {\em $A=B$}. A K Peters, Wellesley, MA.  

\bibitem{Stenlund} {\sc Stenlund, D. and Wan, J. G.} (2017). Some double sums involving binomial coefficients. \\(Work in progress.)

\bibitem{Williams} {\sc Williams, D.} (1991). {\em Probability with Martingales}. Cambridge University Press, Cambridge. 

\bibitem{Williams2} {\sc Williams, D. and McIlroy, P.} (1998). The OK Corral and the power of the law (a curious Poisson-kernel formula for a parabolic equation). {\em Bull. London Math. Soc.} {\bf 30,} 166--170.

\end{thebibliography}
\end{document}